\theoremstyle{plain}
\newtheorem{theorem}{Theorem}
\newtheorem{lemma}[theorem]{Lemma}
\newtheorem{proposition}[theorem]{Proposition}
\numberwithin{theorem}{section}
\numberwithin{equation}{theorem}
\theoremstyle{definition}
\newtheorem{definition}[theorem]{Definition}
\newtheorem{example}[theorem]{Example}
\newtheorem{remark}[theorem]{Remark}
\newtheorem{question}[theorem]{Question}
\newtheorem*{question*}{Question}
\DeclareMathOperator{\Ext}{Ext}
\DeclareMathOperator{\Mod}{Mod}
\DeclareMathOperator{\Aut}{Aut}
\begin{document}
\title
{Nakayama automorphism of a class of graded algebras}
\author{J.-F. L\"u}
\address{(L\"u) Department of Mathematics,
Zhejiang Normal University, Jinhua 321004, China}
\email{jiafenglv@zjnu.edu.cn}

\author{X.-F. Mao}
\address{(Mao) Department of Mathematics,
Shanghai University, Shanghai 200444, China}
\email{xuefengmao@shu.edu.cn}

\author{J.J. Zhang}
\address{(Zhang) Department of Mathematics,
 University of Washington, Seattle, WA 98195, USA}
\email{zhang@washington.edu}

\begin{abstract}
The Nakayama automorphism of a class of connected graded 
Artin-Schelter regular algebras is calculated explicitly. 
\end{abstract}

\subjclass[2010]{16E65, 16W50,  16E10}

\keywords{Nakayama automorphism, 
Artin-Schelter regular algebra}

%\thanks{ }

\maketitle
%\tableofcontents

\setcounter{section}{-1}
\section{Introduction}
\label{xxsec0} 
Throughout let $\Bbbk$ denote a base field and assume that 
$\mathrm{char}\,\Bbbk\neq 2$. All vector spaces and algebras are 
over $\Bbbk$.

We start with the definition of a class of algebras, denoted by 
$A(Q,C,s)$, as follows: 

Let  $Q :=(q_{ij})_{n\times n}$ and $C:=(c_{ij})_{n\times n}$ be two
$n\times n$-matrices in $M_n(\Bbbk)$ such that
\begin{equation}
\label{E0.0.1}\tag{E0.0.1}
q_{ii}=1, \quad q_{ji}=q^{-1}_{ij}, \quad {\text{for all $i$ and $j$,}}
\end{equation}
and
\begin{equation}
\label{E0.0.2}\tag{E0.0.2}
c_{ii}=0, \quad
c_{ji}=-q_{ij}^{-1}c_{ij}=-q_{ji}c_{ij}, \quad {\text{for all $i$ and $j$.}}
\end{equation}
We may call $C$ a {\it skew $Q$-symmetric} matrix.
Let $s$ be an integer such that $1\leq s\leq n$ and
let $\Omega$ denote $\sum_{i=1}^s t_i^2$, considered as an element in the free
algebra $\Bbbk\langle t_1,t_2,\cdots,t_n\rangle$. Define 
$$A(Q,C,s)=\Bbbk\langle t_1,\cdots,t_n\rangle/
(t_j t_i-q_{ij}t_it_j-c_{ij}\Omega,\forall \; i,j).$$
If $c_{ij}=0$ for all $i,j$, it becomes the usually
skew polynomial ring $\Bbbk_{q_{ij}}[t_1,\cdots,t_n]$.
So we will only consider the case when some of $c_{ij}$s are nonzero. 
By carefully choosing the parameters $Q$ and $C$, we can obtain plenty of 
interesting algebras, though all of them are Ore extensions
(at least when $s<n$, see Proposition \ref{yypro1.6}). 
Some examples are given in Section 2. 
The algebra $A(Q,C,s)$ can also be viewed as a deformation of
$\Bbbk_{q_{ij}}[t_1,\cdots,t_n]$ for some special values 
$(q_{ij})_{n\times n}$. 

By definition, $A(Q,C,s)$ is a quadratic algebra. The only result of 
this paper is to give some conditions on $Q$ and $C$ such that $A(Q,C,s)$ is 
Koszul, strongly noetherian, Cohen-Macaulay, Artin-Schelter regular, and 
Auslander regular of global dimension $n$ (and it is also a graded skew 
Clifford algebra); and when in this situation, we give an explicit 
formula of the Nakayama automorphism of the algebra 
$A(Q,C,s)$ in terms of a linear map associated to the basis 
$\{t_1,t_2\cdots,t_n\}$. We omit the definitions of some 
standard terms mentioned as above; these can be found in, 
for example, \cite{ASZ, CV1, Le, RRZ1, RRZ2, SZ, ZZ}. However, we will
recall the definition of the Nakayama automorphism in Definition \ref{yydef0.3} 
below because it is the objective of this paper.

For simplicity, $A(Q,C,s)$ is sometimes denoted by $A(Q,C)$ or $A$.
Consider the following conditions for $Q$ and $C$:
\begin{align}
\label{E0.0.3}\tag{E0.0.3}
q_{j\alpha}^2&=q_{j1}^2,
\quad {\text{for all $j$ and all $\alpha=2,\cdots,s$}}\\
\label{E0.0.4}\tag{E0.0.4}
q_{i\alpha}q_{j\alpha}c_{ij}&=q_{1\alpha}^2c_{ij}, 
 {\text{for all $\alpha$ and all $i,j\neq \alpha$}}\\
\label{E0.0.5}\tag{E0.0.5}
q_{i\alpha}c_{i\alpha}&=c_{i\alpha},
\quad\; {\text{for all $\alpha\leq s$ and all $i$}}\\
\label{E0.0.6}\tag{E0.0.6}
c_{\alpha j}c_{kl}-q_{jl}c_{\alpha l}c_{kj}
+q_{lk}q_{jk}c_{\alpha k}c_{lj}&=0,
\qquad {\text{for all $\alpha\leq s$ and all $j<l<k$}}\\
\label{E0.0.7}\tag{E0.0.7}
c_{\alpha k}c_{lj}-q_{lk}c_{\alpha l}c_{kj}
+q_{jl}q_{jk}c_{\alpha j}c_{kl}&=0,
\qquad {\text{for all $\alpha\leq s$ and all $j<l<k$}}\\
\label{E0.0.8}\tag{E0.0.8}
\det(I_{s\times s}-(c_{ij})_{s\times s})&\neq 0.
\end{align}

\begin{remark}
\label{yyrem0.1} It follows from \eqref{E0.0.1}, \eqref{E0.0.2} and 
\eqref{E0.0.5} that $C_s:=(c_{ij})_{s\times s}$ is skew symmetric, though 
the bigger matrix $C:=(c_{ij})_{n\times n}$ may not be, see
Example \ref{yyex2.1}. Hence \eqref{E0.0.8} is equivalent to
\begin{equation}
\notag
%\label{E0.0.9}\tag{E0.0.9}
\det(I_{s\times s}+(c_{ij})_{s\times s})\neq 0.
\end{equation}
\end{remark}

\begin{theorem}
\label{yythm0.2} Assume ${\rm char}\; \Bbbk\neq 2$.
Suppose $Q$ and $C$ satisfy \eqref{E0.0.1}-\eqref{E0.0.8}. Then
\begin{enumerate}
\item[(1)]
$A(Q,C)$ is a Koszul Artin-Schelter regular algebra of global 
dimension $n$ with Hilbert series $\frac{1}{(1-t)^n}$.
\item[(2)]
It is a strongly noetherian, Cohen-Macaulay, and Auslander regular
domain.
\item[(3)]
The Nakayama automorphism of $A(Q,C)$ is determined by
$$\mu_{A(Q,C)}: t_j\mapsto \begin{cases}
\sum\limits_{i=1}^s\sum\limits_{l=1}^sa_{il}c_{lj}(1+q_{j1}^2)
(\prod\limits_{w=1}^nq_{wj})t_i 
+(\prod\limits_{w=1}^nq_{wj})t_j, \,\,\text{if}\,\, j> s\\
\sum\limits_{i=1}^sb_{ij}q_{j1}^2
(\prod\limits_{w=1}^nq_{wj})t_i, \qquad \qquad\qquad\qquad\qquad
\text{if}\,\,  j\le s.
\end{cases} $$
where $(a_{ij})_{s\times s}= [I_{s\times s}-(c_{ij})_{s\times s}]^{-1}$ and 
$(b_{ij})_{s\times s}= [I_{s\times s}-(c_{ij})_{s\times s}]^{-1}
[I_{s\times s}+(c_{ij})_{s\times s}]$.
\end{enumerate}
\end{theorem}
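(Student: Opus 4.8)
The plan is to reduce everything to the \emph{Clifford core} $s=n$ and then build up the remaining variables as Ore extensions. First I would treat the base case $s=n$, where $\Omega=\sum_{i=1}^n t_i^2$ involves all the generators. Here I would show that the data $(Q,C)$ satisfies the hypotheses of the structure theory of graded skew Clifford algebras in \cite{CV1}: conditions \eqref{E0.0.1}, \eqref{E0.0.2} and \eqref{E0.0.5} make the block $C_s$ skew symmetric (Remark~\ref{yyrem0.1}) and give the required $\mu$-symmetry of the defining quadratic forms, conditions \eqref{E0.0.3}, \eqref{E0.0.4}, \eqref{E0.0.6} and \eqref{E0.0.7} are the Jacobi/consistency relations, and \eqref{E0.0.8} is exactly the non-degeneracy condition. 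Granting this, \cite{CV1} yields that $A(Q,C,n)$ is Koszul, Artin--Schelter regular of global dimension $n$ with Hilbert series $1/(1-t)^n$, and is strongly noetherian, Auslander regular, Cohen--Macaulay, and a domain, establishing parts (1) and (2) when $s=n$.

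For $s<n$ I would invoke the Ore extension structure (Proposition~\ref{yypro1.6}): $A(Q,C,s)$ on $n$ variables is $\bar A[t_n;\sigma,\delta]$, where $\bar A=A(\bar Q,\bar C,s)$ is the subalgebra on $t_1,\dots,t_{n-1}$, with $\sigma(t_i)=q_{in}t_i$ and $\delta(t_i)=c_{in}\Omega$. A direct check shows that \eqref{E0.0.3} and \eqref{E0.0.4} are precisely what force $\sigma$ to be a graded automorphism of $\bar A$: they say $q_{kn}^2$ is independent of $k\le s$ and equals $q_{in}q_{jn}$ whenever $c_{ij}\neq 0$, so that $\sigma(\Omega)=q_{1n}^2\Omega$ and $\sigma$ preserves every relation, while \eqref{E0.0.5}--\eqref{E0.0.7} make $\delta$ a $\sigma$-derivation of degree $+1$. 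Since connected graded Ore extensions preserve Koszulity, Artin--Schelter regularity (raising global dimension by one), strong noetherianity, Auslander regularity, the Cohen--Macaulay property and the domain property, and multiply the Hilbert series by $1/(1-t)$, induction on $n-s$ gives parts (1) and (2) in general.

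For part (3) I would again start from the Clifford core and propagate through the Ore extensions. In the case $s=n$, $A$ is Koszul and Artin--Schelter regular, so its quadratic dual $A^{!}$ is a graded Frobenius algebra, and by the Koszul-dual description of the Nakayama automorphism from \cite{RRZ1, RRZ2} the action of $\mu_A$ on $A_1=\mathrm{span}(t_i)$ is dual to the Nakayama automorphism of the Frobenius algebra $A^{!}$ on $A^{!}_1$. The Frobenius form of $A^{!}$ is projection to the one-dimensional top degree $A^{!}_n$, and I expect its Gram matrix, in the basis dual to $t_1,\dots,t_n$, to be governed by $I_{s\times s}-C_s$; since $C_s$ is skew symmetric its transpose is $I_{s\times s}+C_s$, so the associated Nakayama matrix is the Cayley transform $(I_{s\times s}-C_s)^{-1}(I_{s\times s}+C_s)=(b_{ij})$, and tracking the $q$-scalars through the top-degree pairing contributes the factors $q_{j1}^2$ and $\prod_{w}q_{wj}$, giving the $j\le s$ formula. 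For $s<n$ I would apply the Nakayama-automorphism formula for Ore extensions of \cite{RRZ2}: $\mu_{\bar A[t_n;\sigma,\delta]}$ restricts on $\bar A$ to $\mu_{\bar A}$ twisted by $\sigma$ (which multiplies the eigenvalue of $t_j$ by $q_{nj}$, accumulating to $\prod_w q_{wj}$ over the whole induction) and sends $t_n\mapsto(\prod_w q_{wn})t_n$ plus a correction in $\mathrm{span}(t_1,\dots,t_s)$ coming from $\delta$, whose coefficient resolves against $(a_{il})=(I_{s\times s}-C_s)^{-1}$ into $\sum_{i,l}a_{il}c_{lj}(1+q_{j1}^2)(\prod_w q_{wj})$.

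The main obstacle will be the two explicit matrix computations in part (3): establishing that the top-degree Frobenius pairing of $A^{!}$ has Gram matrix $I_{s\times s}-C_s$ — this is where the quadratic element $\Omega$ entangles the squares $t_k^2$ and the skew-symmetry of $C_s$ must be used carefully — and resolving the derivation contribution in the Ore-extension step into closed form. The appearance of the inverse matrices $(a_{ij})$ and $(b_{ij})$ signals that a linear system forced by the normalizing element $\Omega$ must be solved, and pinning down the precise scalar $(1+q_{j1}^2)$ in the correction term for $j>s$ is the most delicate bookkeeping; by contrast, verifying that \eqref{E0.0.3}--\eqref{E0.0.7} give a well-defined Ore extension and that the homological properties transfer is routine once the base case is in hand.
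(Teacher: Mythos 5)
Your route---establish the $s=n$ ``Clifford core'' via \cite{CV1} and then climb up by Ore extensions---is genuinely different from the paper's, which never inducts on the number of variables: the paper works entirely on the Koszul dual side, realizing $A^!(Q,C)$ as the quotient of the skew polynomial ring $T_1$ by the regular normalizing sequence $\{x_1^2-x_i^2\}_{i=2}^{s}\cup\{x_p^2\}_{p=s+1}^{n}\cup\{\phi\}$ and applying \cite[Corollary 2.2]{CV1} to get that $A^!$ is Koszul Frobenius with Hilbert series $(1+t)^n$, whence parts (1) and (2); part (3) is then obtained by pushing the known Nakayama automorphism of $T_1$ down the tower $T_1\to T_2\to A^!$ via Lemma \ref{yylem1.4} and dualizing with \cite[Theorem 4.3(2)]{RRZ2}. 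Your Ore-extension bookkeeping for parts (1) and (2) would be fine \emph{if} the base case were in hand, but as written the proposal has two genuine gaps.

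First, the base case. Being a graded skew Clifford algebra does not by itself imply Artin--Schelter regularity: the criterion in \cite{CV1} is that the associated quadric system be normalizing \emph{and base-point free}, and verifying exactly that for the system $\{x_1^2-x_i^2\}\cup\{x_p^2\}\cup\{\phi\}$ is the entire technical content of the paper's Lemmas \ref{yylem1.1}--\ref{yylem1.3}: normality of $\phi$ in $T_2$ consumes \eqref{E0.0.6} and \eqref{E0.0.7} through the identities $\sum_j c_{\alpha j}x_jM_j=\sum_j c_{\alpha j}M_jx_j=0$, and base-point-freeness amounts to $\dim_\Bbbk A^!(Q,C)<\infty$, i.e.\ Lemma \ref{yylem1.3}(5). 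Your phrase ``granting this'' skips all of it; conditions \eqref{E0.0.1}--\eqref{E0.0.8} are not packaged anywhere as ``the hypotheses of \cite{CV1}.'' Worse, even membership in the class of graded skew Clifford algebras requires a normalizing sequence of $A$ spanning $\sum_i\Bbbk t_i^2$, i.e.\ that $\Omega$ is normal in $A$; the paper proves this only in Proposition \ref{yypro1.5}, \emph{after} Theorem \ref{yythm0.2}, using the domain property from part (2). Assuming it at the outset is circular.

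Second, part (3). For the Ore-extension steps you invoke the Nakayama formulas of \cite{LWW, RRZ2}, but those leave the $\delta$-correction term (the element $b$ in $\mu(t_n)=c\,t_n+b$) essentially undetermined---which is precisely why the paper states in the introduction that those methods ``do not apply in our situation.'' Your claim that this correction ``resolves against $(a_{il})$ into closed form'' is the statement to be proved, not an argument for it. Likewise, for the core case the assertion that the top-degree Frobenius pairing of $A^!$ has Gram matrix governed by $I_{s\times s}-C_s$ is flagged by you as an expectation, not a computation. The paper instead obtains the Cayley-transform matrices $(a_{ij})$ and $(b_{ij})$ by solving the linear system coming from $(x_i+y_i)\phi=\phi(x_i-y_i)$ (Lemma \ref{yylem1.2}(3)) for the conjugation automorphism $\tau_\phi$, composing with $\mu_{T_2}$ via Lemma \ref{yylem1.4}, and then dualizing. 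Without that, or an equally explicit computation of the Frobenius pairing, the formula in part (3) is not established.
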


As a consequence, $A(Q,C)$ is a graded skew Clifford algebra in the
sense of Cassidy-Vancliff \cite{CV1, CV2}, see Proposition \ref{yypro1.5}. 
Assisted by any computing software, say Maple, after finding a solution 
to system \eqref{E0.0.1}-\eqref{E0.0.8} (which consists of quite 
naive equations), one can construct immediately an Artin-Schelter 
regular algebra of dimension $n$. This is similar to the ideas of 
Cassidy-Vancliff in \cite{CV1, CV2}. 

As promised we recall the definition of the Nakayama automorphism,
see \cite{RRZ1} for more information.

\begin{definition}
\label{yydef0.3}
Let $A$ be an algebra over $\Bbbk$. Let $A^e=A\otimes_{\Bbbk} A^{op}$.
\begin{enumerate}
\item[(1)]
$A$ is called {\it skew Calabi-Yau} if
\begin{enumerate}
\item[(i)]
$A$ is {\it homologically smooth}, that is, $A$ has a projective resolution
in the category $A^e$-$\Mod$ that has finite length and each term in
the projective resolution is finitely generated, and
\item[(ii)]
there is an integer $d$ and an algebra automorphism $\mu$ of $A$ such
that
\begin{equation}
\label{E0.3.1}\tag{E0.3.1}
\Ext^i_{A^e}(A, A^e)\cong \begin{cases} 0 & i\neq d\\
{^1A^\mu} & i=d, \end{cases}
\end{equation}
as $A$-bimodules, where $1$ denotes the identity map of $A$.
\end{enumerate}
\item[(2)]
If \eqref{E0.3.1} holds for some algebra automorphism $\mu$ of $A$,
then $\mu$ is called the {\it Nakayama automorphism} of $A$, and
is usually denoted by $\mu_A$. Note that $\mu_A$ (if exists) is
unique up to inner automorphisms of $A$.
\item[(3)]
$A$ is called {\it Calabi-Yau}  if it is skew
Calabi-Yau and $\mu_A$ is inner, or equivalently, $\mu_A$ can be
chosen to be the identity map after changing a bimodule generator of 
${^1A^\mu}$.
\end{enumerate}
\end{definition}

When $A$ is a connected graded domain, every unit of $A$ is a nonzero
scalar, which is central. In this case, every inner automorphism 
of $A$ is the identity map and the Nakayama automorphism of 
$A$ (if exists) is unique.

By \cite[Lemma 1.2]{RRZ1}, a connected graded algebra is 
skew Calabi-Yau if and only if it is Artin-Schelter regular.
The Nakayama automorphism of an Artin-Schelter regular algebra
(or more generally a 
skew Calabi-Yau algebra) is an important invariant, which is closely related to 
the study of 
\begin{enumerate}
\item[(1)]
rigid dualizing complexes \cite{V1} and the Calabi-Yau 
property \cite{RRZ1, RRZ2};
\item[(2)]
the Poincar{\'e} duality (or Van den Bergh duality) 
between Hochschild homology and 
Hochschild cohomology \cite{V2};
\item[(3)]
the automorphism group of Artin-Schelter regular algebras 
and group/Hopf algebra actions on Artin-Schelter regular 
algebras \cite{LMZ};
\item[(4)]
locally nilpotent derivations and cancellation problem 
of Artin-Schelter regular algebras \cite{LMZ}.
\end{enumerate}
Several authors have been studying the Nakayama automorphism of some 
special classes of algebras during the last few years. Yekutieli 
gave an explicit formula of the Nakayama automorphism of the universal 
enveloping algebra $U(L)$ of a finite dimensional Lie algebra $L$
\cite{Ye}. Rogalski-Reyes-Zhang proved several homological identities about 
the Nakayama automorphism in \cite{RRZ1, RRZ2}.  Liu-Wang-Wu studied 
the Nakayama automorphism for Ore extensions in \cite{LWW}; 
in particular, they gave a description of the Nakayama automorphism 
of the Ore extension algebra $A[x;\sigma,\delta]$. Zhu-Van Oystaeyen-Zhang 
computed the Nakayama automorphism of a trimmed double Ore extension 
of a Koszul Artin-Schelter regular algebra in \cite{ZVZ}. The authors 
of the present paper studied group actions and Hopf algebra actions 
on Artin-Schelter regular algebras of global dimension three in 
connection with the Nakayama automorphism \cite{LMZ}. In general, the 
Nakayama automorphism is a subtle invariant and difficult to compute.
Therefore it is definitely worth understanding and calculating the 
Nakayama automorphism for more examples.

Note that the methods introduced in the papers \cite{RRZ1, RRZ2, LWW} 
does not apply in our situation. We use another well-known 
method coming from Cassidy-Vancliff's paper \cite{CV1}, by constructing 
a sequence of regular normalizing elements. Consequently, the Koszul 
dual of $A(Q,C)$ is a complete intersection in the sense of 
\cite{CV1, CV2, KKZ}. The algebras appeared in Theorem \ref{yythm0.2}
are very easy in terms of generators and relations. It would be 
interesting to work out further properties concerning group or Hopf 
algebra actions and universal quantum linear group coactions (as in
the work \cite{WW}) on $A(Q,C)$, and invariants, such as, the center, the 
automorphism group, and the Makar-Limanov invariant, of $A(Q,C)$.

We finish the introduction with the following question.

\begin{question}
\label{yyque0.4}
What is the explicit formula for the Nakayama automorphism 
of a graded Artin-Schelter regular skew Clifford algebra 
in the sense of \cite{CV1, CV2}?
\end{question}

\section*{Acknowledgments}
J.-F. L\"u  was supported by NSFC (Grant Nos.11571316 and 11001245) and Natural
Science Foundation of Zhejiang Province (Grant No. LY16A010003).
X.-F. Mao was
supported by NSFC (Grant No.11001056), the Key Disciplines of Shanghai Municipality
(Grant No.S30104) and the Innovation Program of Shanghai Municipal
Education Commission (Grant No.12YZ031).
J.J. Zhang was supported by the US National Science
Foundation (NSF grant No. DMS 1402863).

\section{The proof of Theorem \ref{yythm0.2}}
\label{yysec1}
In this section, we study the algebra $A(Q,C)$ defined as in 
the introduction. Throughout this section we assume 
that 
$${\text{{\bf the parameters $(Q,C)$ satisfy conditions 
\eqref{E0.0.1}-\eqref{E0.0.8}. }}}$$

By definition, the algebra $A(Q,C)$ is a quadratic algebra, and its 
Koszul dual is 
$$A^!(Q,C):=\Bbbk \langle x_1,\cdots,x_n\rangle/(Rel)$$
where $\{x_1,\cdots,x_n\}$ is the dual basis of $\{t_1,\cdots,t_n\}$
and the ideal $(Rel)$ is generated by the following relations.
\begin{align}
\label{E1.0.1}\tag{E1.0.1}
x_j x_i+q_{ij}^{-1} x_i x_j&=0, \qquad \quad {\text{for all $1\le i< j\le n$}},\\
\label{E1.0.2}\tag{E1.0.2}
x_i^2&=0, \qquad \quad {\text{for all $i>s$}},\\
\label{E1.0.3}\tag{E1.0.3}
x_i^2-x_1^2&=0, \qquad \quad {\text{for all $1<i\leq s$}},\\
\label{E1.0.4}\tag{E1.0.4}
x_1^2&= \sum_{i<j} q_{ij}^{-1}c_{ij} x_i x_j
(=-\sum_{i<j} c_{ji} x_i x_j).
\end{align}
Let $p_{ij}=-q_{ij}^{-1}$ and define 
\begin{align}
\label{E1.0.5}\tag{E1.0.5}
T_1=\Bbbk_{p_{ij}}[x_1,\cdots,x_n]
(:=\Bbbk\langle x_1,\cdots,x_n\rangle/(x_jx_i-p_{ij}x_ix_j,\forall \; i<j)).
\end{align}
We can construct a sequence of regular normal elements. Since
$T_1$ is a skew polynomial ring, it is Koszul and Artin-Schelter 
regular of global dimension $n$. It is well-known that the monomials 
$\{x_1^{d_1}\cdots x_n^{d_n} \mid d_s\geq 0\}$ form a $\Bbbk$-linear 
basis and the Hilbert series of $T_1$ is $\frac{1}{(1-t)^{n}}$. 
Since $x_i^2$ are normal and since condition \eqref{E0.0.3}, one can 
check, by using the monomial basis, that $\{x_1^2-x_i^2\}_{i=2}^{s}
\cup \{ x_p^2\}_{p=s+1}^n$ form a regular normal sequence of $T_1$.
By factoring out this sequence of regular normal elements in $T_1$, we let
\begin{equation}
\label{E1.0.6}\tag{E1.0.6}
T_2=T_1/(\{x_1^2-x_i^2\}_{i=2}^{s}\cup \{ x_p^2\}_{p=s+1}^n).
\end{equation}
Then $T_2$ is an Artin-Schelter Gorenstein algebra of injective dimension 
one and its Hilbert series is
$$H_{T_2}(t)=\frac{(1-t^2)^{n-1}}{(1-t)^n}=\frac{(1+t)^{n-1}}{(1-t)}.$$
Note that $A^!(Q,C)$ is a factor ring of $T_2$.
Next we define some elements in $T_1$ (or $T_2$ or $A^!(Q,C)$). 
For any $1\leq \alpha\leq n$, let
$$y_{\alpha}:=\sum_{i=1}^n c_{\alpha i} x_i=\sum_{i\neq \alpha}
c_{\alpha i} x_i,$$
$$W_\alpha:=y_{\alpha} x_{\alpha}
=\sum_{i<\alpha} c_{\alpha i} x_i x_{\alpha}+
\sum_{j>\alpha} c_{j \alpha } x_{\alpha}x_j,$$
$$M_{\alpha}:=\sum_{i<j, i\neq \alpha,j\neq \alpha} c_{ji} x_ix_j,$$
$${\mathbb W}:=\sum_{\alpha>s}W_{\alpha},$$
and
$$\phi:=x_1^2+\sum_{i<j} c_{ji} x_i x_j.$$
Note that the algebras $T_1$, $T_2$ and $A^!(Q,C)$ and the above 
elements are well-defined without $(Q,C)$ satisfying 
\eqref{E0.0.1}-\eqref{E0.0.8}, but we always assume 
\eqref{E0.0.1}-\eqref{E0.0.8} in this section.

The following is an easy observation.

\begin{lemma}
\label{yylem1.1}
Parts {\rm{(1,2)}} hold
in the algebra $T_1$ and part {\rm{(3)}} holds in $T_2$.
\begin{enumerate}
\item[(1)]
$x_{\alpha} y_{\alpha} =-y_{\alpha} x_{\alpha}$
and $x_{\alpha} W_{\alpha}=-W_{\alpha} x_{\alpha}$
for all $\alpha\leq s$.
\item[(2)]
$x_{\alpha} M_{\alpha}=q_{1\alpha}^{-2} M_{\alpha} x_{\alpha}$
for all $\alpha$, and $x_{\alpha} M_{\alpha}=M_{\alpha} x_{\alpha}$
for all $\alpha\leq s$.
\item[(3)]
$W_{i}x_i=x_i W_{i}=0$ for all $i>s$.
\end{enumerate}
\end{lemma}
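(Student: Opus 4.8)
The plan is to first record a single uniform commutation rule in $T_1$ and then reduce each of the three statements to a term-by-term scalar identity that follows from conditions \eqref{E0.0.1}--\eqref{E0.0.5}. The key preliminary observation is that, although the defining relations of $T_1$ in \eqref{E1.0.5} straighten only products $x_j x_i$ with $i<j$, one has for \emph{all} $a\neq b$ the single relation $x_a x_b=-q_{ab}\,x_b x_a$. Indeed, for $a<b$ the relation $x_b x_a=p_{ab}x_a x_b=-q_{ab}^{-1}x_a x_b$ inverts to this, and for $a>b$ the relation with $a,b$ swapped together with $q_{ba}=q_{ab}^{-1}$ from \eqref{E0.0.1} gives the same formula. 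I would state this rule first, since all three parts rest on it.

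For part (1) I would compute the symmetric combination $x_\alpha y_\alpha+y_\alpha x_\alpha=\sum_{i\neq\alpha}c_{\alpha i}(x_\alpha x_i+x_i x_\alpha)$ and show every bracket vanishes. The uniform rule (using $q_{\alpha i}=q_{i\alpha}^{-1}$) makes the coefficient of $x_i x_\alpha$ equal to $c_{\alpha i}(1-q_{i\alpha}^{-1})$ in both cases $i<\alpha$ and $i>\alpha$. Rewriting $c_{\alpha i}=-q_{i\alpha}^{-1}c_{i\alpha}$ by \eqref{E0.0.2}, this coefficient is a scalar multiple of $c_{i\alpha}(1-q_{i\alpha}^{-1})$. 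Here \eqref{E0.0.5} enters decisively: $q_{i\alpha}c_{i\alpha}=c_{i\alpha}$ forces $q_{i\alpha}=1$ whenever $c_{i\alpha}\neq 0$, so the factor $1-q_{i\alpha}^{-1}$ vanishes exactly on the support of $c_{i\alpha}$ and every term dies. This gives $x_\alpha y_\alpha=-y_\alpha x_\alpha$, whereupon the second identity is immediate: $x_\alpha W_\alpha=(x_\alpha y_\alpha)x_\alpha=-(y_\alpha x_\alpha)x_\alpha=-W_\alpha x_\alpha$.

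For part (2) I would move $x_\alpha$ across each monomial of $M_\alpha=\sum_{i<j,\,i,j\neq\alpha}c_{ji}x_i x_j$. Two applications of the uniform rule give $x_\alpha x_i x_j=q_{\alpha i}q_{\alpha j}\,x_i x_j x_\alpha$, so $x_\alpha M_\alpha=\sum c_{ji}\,q_{\alpha i}q_{\alpha j}\,x_i x_j x_\alpha$. Condition \eqref{E0.0.4}, applied to the pair $(j,i)$, reads $q_{i\alpha}q_{j\alpha}c_{ji}=q_{1\alpha}^2 c_{ji}$; multiplying by $q_{i\alpha}^{-1}q_{j\alpha}^{-1}$ and using $q_{\alpha i}=q_{i\alpha}^{-1}$ yields $q_{\alpha i}q_{\alpha j}c_{ji}=q_{1\alpha}^{-2}c_{ji}$ for each admissible pair, hence $x_\alpha M_\alpha=q_{1\alpha}^{-2}M_\alpha x_\alpha$ term by term. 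For $\alpha\leq s$, taking $j=1$ in \eqref{E0.0.3} gives $q_{1\alpha}^2=q_{11}^2=1$, so the scalar is $1$ and $x_\alpha M_\alpha=M_\alpha x_\alpha$. For part (3) I would pass to $T_2$, where $x_i^2=0$ for $i>s$ by \eqref{E1.0.6}: since $W_i=y_i x_i$, we get $W_i x_i=y_i x_i^2=0$ directly, and for $x_i W_i=x_i y_i x_i$ the uniform rule pushes the leading $x_i$ to the right of $y_i=\sum_{k\neq i}c_{ik}x_k$, producing $\sum_{k\neq i}c_{ik}(-q_{ik})x_k x_i^2=0$.

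I expect the only genuinely delicate point to be the coefficient bookkeeping in part (1): one must recognize that \eqref{E0.0.5} is precisely what forces the relevant scalar factor to vanish on the support of the structure constants $c_{i\alpha}$. Parts (2) and (3) are essentially mechanical once the uniform commutation relation and the additional relations of $T_2$ are in hand.
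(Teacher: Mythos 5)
Your proof is correct, and it is essentially the paper's argument: the paper disposes of this lemma with the single line ``each follows from a direct computation under the conditions \eqref{E0.0.1}--\eqref{E0.0.5},'' and your write-up simply carries out that computation, with the uniform rule $x_ax_b=-q_{ab}x_bx_a$ and the observation that \eqref{E0.0.5} kills the factor $1-q_{i\alpha}^{-1}$ on the support of $c_{i\alpha}$ being exactly the right bookkeeping.
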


\begin{proof} Each follows from a direct computation under the 
conditions \eqref{E0.0.1}-\eqref{E0.0.5}.
\end{proof}

\begin{lemma}
\label{yylem1.2} 
The following hold in $T_2$.
\begin{enumerate}
\item[(1)]
$\sum_{j} c_{\alpha j} x_j M_j=\sum_{j} c_{\alpha j} M_j x_j=0$
for all $\alpha\leq s$.
\item[(2)]
If $i>s$, then $x_i \phi=q_{1i}^{-2} \phi x_i$.
\item[(3)]
If $i\le s$, then $(x_i+y_i) \phi=\phi (x_i-y_i)$.
\item[(4)]
$\phi$ is normal in $T_2$.
\end{enumerate}
\end{lemma}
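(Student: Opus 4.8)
The organizing observation is that, for \emph{every} index $\alpha$, the defining sum of $\phi$ splits according to whether a factor $x_\alpha$ is present:
$$\phi = x_1^2 + W_\alpha + M_\alpha,$$
because the terms of $\sum_{i<j} c_{ji} x_i x_j$ with $i=\alpha$ or $j=\alpha$ assemble exactly into $W_\alpha$ and the remaining terms into $M_\alpha$. All four parts are built on this single decomposition together with Lemma \ref{yylem1.1}.

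For part (1) I would expand $\sum_j c_{\alpha j} M_j x_j$ (and symmetrically $\sum_j c_{\alpha j} x_j M_j$) as a sum of degree-three monomials $x_p x_q x_j$ with $p<q$ and $p,q,j$ pairwise distinct, then reorder each into the ascending-index monomial basis of $T_1$ using \eqref{E1.0.1}. Collecting the coefficient of each basis monomial $x_a x_b x_c$ with $a<b<c$ produces precisely a cubic expression in the $c$'s of the shape occurring in \eqref{E0.0.6} and \eqref{E0.0.7}; the two hypotheses \eqref{E0.0.6}--\eqref{E0.0.7} are exactly what forces every such coefficient, and hence the whole sum, to vanish. This is the most bookkeeping-intensive step, and I expect it to be the main obstacle, though it is conceptually routine once the monomials are normal-ordered.

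Parts (2) and (3) then follow by feeding the decomposition into Lemma \ref{yylem1.1}. For $i>s$ one has $x_i x_1^2 = q_{1i}^{-2} x_1^2 x_i$ from \eqref{E1.0.1}, while $W_i x_i = x_i W_i = 0$ and $x_i M_i = q_{1i}^{-2} M_i x_i$ by Lemma \ref{yylem1.1}(3),(2); substituting gives $x_i\phi = q_{1i}^{-2}(x_1^2+M_i)x_i = q_{1i}^{-2}\phi x_i$, which is (2). For $i\le s$ one has $q_{1i}^2=1$ by \eqref{E0.0.3}, so $x_1^2$ commutes with $x_i$, and together with $x_iW_i=-W_ix_i$, $x_iM_i=M_ix_i$ (Lemma \ref{yylem1.1}(1),(2)) this yields $x_i\phi-\phi x_i=-2W_ix_i=-2y_ix_1^2$. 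Thus (3) reduces to the identity $y_i\phi+\phi y_i = 2y_ix_1^2$. Writing $y_i\phi+\phi y_i=\sum_k c_{ik}(x_k\phi+\phi x_k)$ and applying the commutation rules just established termwise, then invoking $\sum_k c_{ik}M_kx_k=0$ from part (1), collapses the claim to the residual sum $\sum_{k>s} c_{ik}\big[(q_{1k}^2-1)x_kx_1^2+(q_{1k}^{-2}-1)M_kx_k\big]=0$. The subtle point is that this vanishes for \emph{parameter} reasons: by \eqref{E0.0.5} (applied to $c_{ki}$) a nonzero $c_{ik}$ forces $q_{ki}=1$, whence $q_{1k}^2=1$ by \eqref{E0.0.3} (the case $i=1$ being immediate from \eqref{E0.0.5}), so both bracketed factors die.

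Finally, part (4) is pure linear algebra over $T_2$. Parts (2)--(3) show $x_k\phi\in\phi T_2$ and $\phi x_k\in T_2\phi$ for every $k>s$, and that $(x_i\pm y_i)\phi$ lies in $\phi T_2$, respectively $T_2\phi$, for $i\le s$. Splitting $x_i\pm y_i=\sum_{k\le s}(\delta_{ik}\pm c_{ik})x_k+\sum_{k>s}c_{ik}x_k$ and discarding the $k>s$ part (which already lies in the correct ideal), the invertibility of $I_{s\times s}\pm(c_{ij})_{s\times s}$, guaranteed by \eqref{E0.0.8} and Remark \ref{yyrem0.1}, lets me solve for each individual $x_j\phi$ with $j\le s$ as a combination lying in $\phi T_2$, and dually for $\phi x_j$. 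Hence $x_k\phi\in\phi T_2$ and $\phi x_k\in T_2\phi$ for all $k$, giving $\phi T_2 = T_2\phi$, i.e. $\phi$ is normal in $T_2$.
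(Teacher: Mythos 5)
Your proposal is correct. Parts (1), (2) and (4) follow the paper's own argument essentially verbatim: (1) by normal-ordering the cubic monomials and matching coefficients against \eqref{E0.0.6} and \eqref{E0.0.7}, (2) by the decomposition $\phi=x_1^2+W_i+M_i$ together with Lemma \ref{yylem1.1}, and (4) by the change of basis afforded by \eqref{E0.0.8} and Remark \ref{yyrem0.1}. The only genuine divergence is in part (3). The paper expands $(x_\alpha+y_\alpha)\phi$ and $\phi(x_\alpha-y_\alpha)$ separately and shows each equals the common expression $x_\alpha^3+x_\alpha M_\alpha+\sum_{j\le s}c_{\alpha j}x_jW_j$, disposing of the $j>s$ contributions via $x_jW_j=W_jx_j=0$ (Lemma \ref{yylem1.1}(3)) together with part (1). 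You instead compute the difference, reduce it to the residual sum $\sum_{k>s}c_{ik}\bigl[(q_{1k}^2-1)x_kx_1^2+(q_{1k}^{-2}-1)M_kx_k\bigr]$, and kill it term by term by the parameter observation that $c_{ik}\neq 0$ forces $q_{ki}=1$ by \eqref{E0.0.5} and hence $q_{1k}^2=1$ by \eqref{E0.0.3} (with $i=1$ handled directly by \eqref{E0.0.5}). I verified your reduction: $(x_i+y_i)\phi-\phi(x_i-y_i)=-2y_ix_1^2+(y_i\phi+\phi y_i)$, and the termwise evaluation of $x_k\phi+\phi x_k$ in the two ranges $k\le s$ and $k>s$ produces exactly that residual sum once $\sum_k c_{ik}M_kx_k=0$ from part (1) is invoked, so the argument is sound. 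What your route buys is that it surfaces explicitly the role of \eqref{E0.0.5}, which in the paper is hidden inside the unproved direct computations of Lemma \ref{yylem1.1}; what it costs is the extra bookkeeping of the residual sum, which the paper's head-on computation of both sides avoids.
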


\begin{proof} (1) 
We use \eqref{E0.0.7} in the last step of the following 
computation
$$\begin{aligned}
\sum_{j} c_{\alpha j}  x_j M_j &=
\sum_{j} c_{\alpha j}  x_j (\sum_{l<k, l\neq j, k\neq j} c_{kl} x_{l} x_k) \\
&=\sum_{j,l<k, l\neq j, k\neq j}c_{\alpha j}c_{kl} x_jx_k x_l \\
&=\sum_{j<l<k} c_{\alpha j}c_{kl}  x_j x_k x_l+
\sum_{l<j<k} c_{\alpha j}c_{kl}  x_jx_k x_l+
\sum_{l<k<j} c_{\alpha j}c_{kl}  x_jx_k x_l\\
&=\sum_{j<l<k} c_{\alpha j}c_{kl}  x_j x_k x_l+
\sum_{j<l<k} c_{\alpha l}c_{kj}  x_lx_k x_j+
\sum_{j<l<k} c_{\alpha k}c_{lj}  x_kx_l x_j\\
&=\sum_{j<l<k} (q_{jk}q_{jl}c_{\alpha j}c_{kl} -q_{lk}c_{\alpha l}c_{kj}+
c_{\alpha k}c_{lj})x_k x_lx_j \\
&=0.
\end{aligned}
$$
Similarly, we use \eqref{E0.0.6} in the last step of the following 
computation
$$\begin{aligned}
\sum_{j} c_{\alpha j}  M_j x_j&=
\sum_{j} c_{\alpha j}  (\sum_{l<k, l\neq j, k\neq j} c_{kl} x_{l} x_k)x_j \\
&=\sum_{j,l<k, l\neq j, k\neq j}c_{\alpha j}c_{kl} x_k x_lx_j \\
&=\sum_{j<l<k} c_{\alpha j}c_{kl}  x_k x_lx_j+
\sum_{l<j<k} c_{\alpha j}c_{kl}  x_k x_lx_j+
\sum_{l<k<j} c_{\alpha j}c_{kl}  x_k x_lx_j\\
&=\sum_{j<l<k} c_{\alpha j}c_{kl} x_k x_l x_j+
\sum_{j<l<k} c_{\alpha l}c_{kj} x_k x_jx_l +
\sum_{j<l<k} c_{\alpha k}c_{lj} x_l x_jx_k \\
&=\sum_{j<l<k} (c_{\alpha j}c_{kl} -q_{jl}c_{\alpha l}c_{kj}+
q_{lk}q_{jk} c_{\alpha k}c_{lj})x_k x_lx_j =0.
\end{aligned}
$$

(2) Since $i>s$, $x_{i}^2=0$
and $x_i W_i=W_i x_i=0$.
By Lemma \ref{yylem1.1}(2),  $x_i M_i=q_{1i}^{-2} M_i x_i$.
Hence
$$ \begin{aligned}
x_i \phi &= x_i( x_1^2+W_i+M_i)\\&=x_i x_1^2+ x_i M_i\\
&= q_{1i}^{-2} x_1^2 x_i+q_{1i}^{-2} M_i x_i\\
&=q_{1i}^{-2} (x_1^2+W_i+M_i)x_i\\&=q_{1i}^{-2}\phi x_i.
\end{aligned}
$$

(3)  Let $\alpha\leq s$. By using the fact that
$x_{\alpha}^2=x_1^2$ and that 
$\sum_{i<j} c_{ji} x_i x_j=W_l+M_l$ for all $l$, 
and by Lemma \ref{yylem1.1} and part (1), we have 
$$\begin{aligned}
(x_{\alpha}+y_{\alpha}) \phi&= (x_{\alpha}+y_{\alpha})
(x_{\alpha}^2+\sum_{i<j} c_{ji} x_i x_j)\\
&=x_{\alpha}^3 +y_{\alpha} x_{\alpha}^2 +x_{\alpha}(W_{\alpha}+M_{\alpha})
+\sum_{j} c_{\alpha j} x_j (W_{j}+M_{j})\\
%&=x_{\alpha}^3+W_{\alpha}x_{\alpha}+x_{\alpha}W_{\alpha}
%+x_{\alpha} M_{\alpha}
%+\sum_{j}c_{\alpha j} x_j W_j +\sum_{j}c_{\alpha j}x_j M_j\\
&=x_{\alpha}^3+W_{\alpha}x_{\alpha}+x_{\alpha}W_{\alpha}
+x_{\alpha} M_{\alpha}+\sum_{j}c_{\alpha j} x_j W_j \qquad {\text{part (1)}}\\
&=x_{\alpha}^3- x_{\alpha} W_{\alpha}+x_{\alpha}W_{\alpha}
+x_{\alpha} M_{\alpha} +\sum_{j}c_{\alpha j} x_j W_j \qquad {\text{Lemma \ref{yylem1.1}(1)}}\\
&=x_{\alpha}^3
+x_{\alpha} M_{\alpha} +\sum_{j\leq s}c_{\alpha j} x_j W_j \qquad
\qquad\qquad \qquad \;\; {\text{Lemma \ref{yylem1.1}(3).}}
\end{aligned}
$$
Similarly,
$$\begin{aligned}
\phi(x_{\alpha}-y_{\alpha}) &=
(x_{\alpha}^2+\sum_{i<j} c_{ji} x_i x_j)(x_{\alpha}-y_{\alpha})\\
&=x_{\alpha}^3 -x_{\alpha}^2y_{\alpha} +(W_{\alpha}+M_{\alpha})x_{\alpha}
-\sum_{j} c_{\alpha j} (W_{j}+M_{j}) x_j \\
&=x_{\alpha}^3-W_{\alpha} x_{\alpha}
+W_{\alpha}x_{\alpha}+M_{\alpha}x_{\alpha}
-\sum_{j} c_{\alpha j} W_{j} x_j-\sum_{j} c_{\alpha j} M_{j} x_j\\
&=x_{\alpha}^3+M_{\alpha}x_{\alpha}-\sum_{j\leq s} c_{\alpha j} W_{j} x_j\\
&=x_{\alpha}^3+x_{\alpha} M_{\alpha}+\sum_{j\leq s}c_{\alpha j} x_j W_j.
\end{aligned}
$$
So we have
$$ (x_{\alpha}+y_{\alpha})\phi=\phi(x_{\alpha}-y_{\alpha}).$$

(4) By Remark \ref{yyrem0.1}, $\det(I_{s\times s}-C_s)\neq 0$
if and only if $\det(I_{s\times s}+C_s)\neq 0$.
By hypothesis, $\det(I_{s\times s}-C_s)\neq 0$, 
whence $\{x_i-y_i\}_{i=1}^s \bigcup \{x_j\}_{j=s+1}^n$ is a basis 
of $V:=\bigoplus_{i=1}^n \Bbbk x_i$. Similarly, 
$\det(I_{s\times s}+C_s)\neq 0$ implies that $\{x_i+y_i\}_{i=1}^s
\bigcup \{x_j\}_{j=s+1}^n$ is a basis of $V$. In this case, $V\phi=\phi V$
by parts (2,3). Therefore $A\phi=\phi A$. 
\end{proof}
It is clear that $\sum_{i<j} c_{ji} x_i x_j=W_{\alpha}+M_{\alpha}$
for every $\alpha$.

\begin{lemma}
\label{yylem1.3}
The following hold in $A^!(Q,C)$.
\begin{enumerate}
\item[(1)]
$W_\alpha x_\alpha=x_{\alpha} W_\alpha=0$ for all $\alpha$.
\item[(2)]
$\sum_{\alpha=1}^n W_{\alpha}=2\sum_{i<j} c_{ji} x_i x_j
=\sum_{\alpha=1}^s W_{\alpha}+{\mathbb W}$.
\item[(3)]
$W_{\alpha}^2=0$ for all $\alpha>s$ and
${\mathbb W}^{m}=0$ for all $m> n-s$.
\item[(4)]
$x_1^4=-\frac{1}{2} \; {\mathbb W} x_1^2=-\frac{1}{2}\; x_1^2 {\mathbb W}$ 
and $x_1^{4m}=0$ for all $m> n-s$.
\item[(5)]
If $f\in A^!(Q,C)$ is a homogeneous element of degree $>4(n-s)n$,
then $f=0$. As a consequence, $A^!(Q,C)$ is finite dimensional.
\item[(6)]
The algebra $A^!(Q,C)$ is
Koszul and Frobenius of Hilbert series $(1+t)^n$.
\end{enumerate}
\end{lemma}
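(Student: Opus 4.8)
The plan rests on one master observation: the defining relation \eqref{E1.0.4} says precisely that $\phi=0$ in $A^!(Q,C)$, so $A^!(Q,C)=T_2/(\phi)$, and every identity below will be obtained by pushing an identity of $T_2$ through this quotient together with Lemmas \ref{yylem1.1} and \ref{yylem1.2}. Since $\sum_{i<j}c_{ji}x_ix_j=W_\alpha+M_\alpha$ for every $\alpha$, the vanishing of $\phi$ yields the basic substitution $x_1^2=-(W_\alpha+M_\alpha)$ in $A^!(Q,C)$, valid for each $\alpha$; this is the engine behind parts (1)--(4).

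For part (1) the case $\alpha>s$ is immediate from Lemma \ref{yylem1.1}(3). For $\alpha\le s$ I would feed $\phi=0$ into the computation already carried out in Lemma \ref{yylem1.2}(3): both sides there equal $x_\alpha^3+x_\alpha M_\alpha+\sum_{j\le s}c_{\alpha j}x_jW_j$, so this expression vanishes in $A^!(Q,C)$. Using $x_\alpha^2=x_1^2=-(W_\alpha+M_\alpha)$ to rewrite $x_\alpha^3+x_\alpha M_\alpha=-x_\alpha W_\alpha$ turns the identity into $x_\alpha W_\alpha=\sum_{j\le s}c_{\alpha j}\,(x_jW_j)$. Setting $u_\alpha:=x_\alpha W_\alpha$, this reads $(I_{s\times s}-C_s)\mathbf u=0$; since $\det(I_{s\times s}-C_s)\neq0$ by \eqref{E0.0.8}, we get $u_\alpha=0$ for all $\alpha\le s$, and then $W_\alpha x_\alpha=-x_\alpha W_\alpha=0$ by Lemma \ref{yylem1.1}(1). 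Part (2) is a direct expansion of $\sum_\alpha W_\alpha=\sum_{p\neq\alpha}c_{\alpha p}x_px_\alpha$: each unordered pair $\{i<j\}$ contributes the term $c_{ji}x_ix_j$ twice, once from $(p,\alpha)=(i,j)$ and once from $(p,\alpha)=(j,i)$ after applying \eqref{E1.0.1} and \eqref{E0.0.2}, giving $2\sum_{i<j}c_{ji}x_ix_j$; the second equality is just the definition of $\mathbb W$.

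For part (3), $W_\alpha^2=0$ ($\alpha>s$) follows from $W_\alpha^2=y_\alpha(x_\alpha W_\alpha)=0$ using Lemma \ref{yylem1.1}(3) (or part (1)). The nilpotency $\mathbb W^m=0$ for $m>n-s$ is the heart of the lemma: the plan is to show that the elements $\{W_\alpha\}_{\alpha>s}$ pairwise commute up to a nonzero scalar, $W_\alpha W_\beta=\lambda_{\alpha\beta}W_\beta W_\alpha$, so that together with $W_\alpha^2=0$ they behave like the generators of a quantum exterior algebra on the $n-s$ letters $\alpha>s$. Expanding $\mathbb W^m=\sum W_{\alpha_1}\cdots W_{\alpha_m}$, any summand with $m>n-s$ repeats some index; the scalar commutation lets one slide the two equal factors adjacent, where $W_\gamma^2=0$ kills the term, so $\mathbb W^m=0$. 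For part (4), combining part (2) with $\phi=0$ gives $x_1^2=-\tfrac12\sum_\alpha W_\alpha=-\tfrac12(\sum_{\alpha\le s}W_\alpha+\mathbb W)$; since $x_1^2W_\alpha=x_\alpha(x_\alpha W_\alpha)=0$ and $W_\alpha x_1^2=(W_\alpha x_\alpha)x_\alpha=0$ for $\alpha\le s$ by part (1), the $\alpha\le s$ terms drop and $x_1^4=-\tfrac12 x_1^2\mathbb W=-\tfrac12\mathbb W x_1^2$. In particular $x_1^2$ and $\mathbb W$ commute, so an easy induction gives $x_1^{2+2k}=(-\tfrac12)^k x_1^2\mathbb W^{k}$; taking $k=2m-1$ and invoking $\mathbb W^{2m-1}=0$ from part (3) yields $x_1^{4m}=0$ for $m>n-s$.

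Parts (5) and (6) then follow from the module structure of $T_2$ over $\Bbbk[x_1^2]$. Using the monomial basis of $T_2$ (every element is a $\Bbbk$-combination of $x_1^{2a}e_S$ with $a\ge0$ and $e_S$ a square-free monomial, $S\subseteq\{1,\dots,n\}$, whose generating function is $(1+t)^n/(1-t^2)=H_{T_2}(t)$), part (4) makes $x_1^2$ nilpotent, so $x_1^{2a}=0$ once $a$ is large, and since $|S|\le n$ the degrees are bounded by the crude estimate $4(n-s)n$, giving finite-dimensionality. For part (6), $\phi$ is normal by Lemma \ref{yylem1.2}(4) and one checks it is also regular in $T_2$ (multiplication by $\phi=x_1^2+\sum_{i<j}c_{ji}x_ix_j$ is injective, e.g. by viewing $T_2$ as a free $\Bbbk[x_1^2]$-module on the $e_S$ and noting the associated matrix over $\Bbbk[x_1^2]$ has nonzero determinant); hence $0\to T_2(-2)\xrightarrow{\phi}T_2\to A^!(Q,C)\to0$ is exact and $H_{A^!(Q,C)}(t)=H_{T_2}(t)(1-t^2)=(1+t)^n$. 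Finally, $A^!(Q,C)$ is $T_1$ modulo a regular normal sequence of $n$ quadrics, i.e. a noncommutative complete intersection over the Koszul Artin--Schelter regular algebra $T_1$, so it is Koszul and Frobenius by the standard theory of \cite{CV1, CV2, KKZ}. The main obstacle is the scalar commutation $W_\alpha W_\beta=\lambda_{\alpha\beta}W_\beta W_\alpha$ for $\alpha,\beta>s$ in part (3): this is a degree-four rearrangement that forces the full use of the compatibility conditions \eqref{E0.0.3}--\eqref{E0.0.7} (the $q$-conditions to make the reordering scalar independent of the inner indices, the $c$-conditions to cancel the cross terms); establishing the regularity of $\phi$ in part (6) is the secondary technical point.
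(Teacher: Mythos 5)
Your overall architecture --- identify $A^!(Q,C)$ with $T_2/(\phi)$, push identities of $T_2$ through the quotient, and conclude Koszulness and the Frobenius property from the regular normalizing sequence via \cite{CV1} --- is the same as the paper's, and several sub-parts are sound: part (2), the identity $W_\alpha^2=0$, and part (4) agree with the paper, and your part (1) for $\alpha\le s$ is a legitimate variant (the paper expands $0=x_1^2x_\alpha-x_\alpha x_1^2$ directly to get $-2W_\alpha x_\alpha=0$, whereas you set $\phi=0$ in the computation of Lemma \ref{yylem1.2}(3) and invert $I_{s\times s}-C_s$; both work, yours using \eqref{E0.0.8} where the paper uses ${\rm char}\,\Bbbk\neq 2$).

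There are, however, two genuine gaps. First, your proof of ${\mathbb W}^m=0$ rests on the unproven claim $W_\alpha W_\beta=\lambda_{\alpha\beta}W_\beta W_\alpha$ with a single scalar $\lambda_{\alpha\beta}$, which you yourself flag as the main obstacle. This claim is doubtful: reordering a monomial $x_ix_\alpha x_jx_\beta$ into $x_jx_\beta x_ix_\alpha$ via \eqref{E1.0.1} produces the scalar $q_{ij}q_{\alpha j}q_{i\beta}q_{\alpha\beta}$, which by \eqref{E0.0.4} equals $q_{1j}^2q_{1\beta}^2$ on the terms with $c_{\alpha i}\neq 0$; this still depends on the inner index $j$, and nothing in \eqref{E0.0.3}--\eqref{E0.0.7} forces $q_{1j}^2$ to be constant over the $j$ with $c_{\beta j}\neq 0$. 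Fortunately the detour is unnecessary: every monomial occurring in $W_{\alpha_1}\cdots W_{\alpha_m}$ contains each $x_{\alpha_i}$ as a factor, the relations \eqref{E1.0.1} are monomial, so any word with a repeated index $\gamma>s$ sorts to a scalar multiple of a word containing $x_\gamma^2=0$; this is exactly the paper's one-line argument (``$W_j=x_jf$ and $x_j^2=0$''), and you should replace the quasi-commutation step by it. Second, in part (6) the regularity of $\phi$ in $T_2$ is asserted via ``the associated matrix over $\Bbbk[x_1^2]$ has nonzero determinant'' without proof; since the off-diagonal entries of that matrix also carry $x_1^2$-terms, neither the top nor the constant coefficient of the determinant is obviously nonzero. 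The argument can be repaired (over the principal ideal domain $\Bbbk[x_1^2]$ the cokernel of a square matrix is finite dimensional if and only if its determinant is nonzero, and finite-dimensionality of the cokernel is your part (5)), but as written it is a gap; the paper instead rules out a nonzero, eventually vanishing annihilator of $\phi$ by invoking the GK-Cohen-Macaulay property of $T_2$ from \cite{SZ}.
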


\begin{proof} (1) If $\alpha>s$, then the assertion follows from
Lemma \ref{yylem1.1}(3).
Now assume that $\alpha\leq s$.
It follows from Lemma \ref{yylem1.1}(1) that
$x_{\alpha} W_{\alpha}=- W_{\alpha} x_{\alpha}$.
By using Lemma \ref{yylem1.1}(2) and the fact that $p_{1\alpha}^2=1$ 
for all $\alpha\leq s$, we have the following computation in $A^!(Q,C)$:
$$\begin{aligned}
0&=x_1^2 x_{\alpha} -x_{\alpha} x_1^2\\
&=(-\sum_{i<j} c_{ji} x_i x_j) x_{\alpha}+
x_{\alpha}(\sum_{i<j} c_{ji} x_i x_j) \\
&=-(W_{\alpha}+M_{\alpha}) x_{\alpha}+x_{\alpha}
(W_{\alpha}+M_{\alpha} )\\
&=-W_{\alpha} x_{\alpha}-M_{\alpha}x_{\alpha}+x_{\alpha}W_{\alpha}
+x_{\alpha} M_{\alpha}\\
&=-W_{\alpha} x_{\alpha}-
M_{\alpha}x_{\alpha}-W_{\alpha}x_{\alpha}
+M_{\alpha} x_{\alpha}\\
&=-2W_{\alpha} x_{\alpha}.
\end{aligned}
$$
The assertion follows  since ${\rm char}\; \Bbbk\neq 2$.

(2) Clear.

(3) Note that $W_{j}=x_{j} f$ for some $f\in A^!(Q,C)$.
The assertions follow from the fact $x_{j}^2=0$ for all
$j>s$.

(4) We compute
$$\begin{aligned}
-2x_1^4 &= -2x_1^2 x_1^2=2(\sum_{i<j} c_{ji} x_i x_j) x_1^2
=(\sum_{\alpha=1}^s W_{\alpha}+{\mathbb W}) x_1^2\\
&=\sum_{\alpha=1}^s W_{\alpha}x_1^2 +{\mathbb W} x_1^2
=\sum_{\alpha=1}^s W_{\alpha}x_{\alpha}^2+ {\mathbb W}x_1^2
=0+ {\mathbb W} x_1^2= {\mathbb W} x_1^2.
\end{aligned}
$$
Similarly, $-2x_1^4= x_1^2 {\mathbb W}$.
So the first assertion follows. The second assertion
follows from the first one and part (3).

(5) It follows from part (4) and the relations of $A^!(Q,C)$ 
that $x_i^{4m}=0$ for all $m>n-s$.
So any  monomial of degree $>4(n-s)n$ is zero.
The assertion follows.

(6) By Lemma \ref{yylem1.2}(4), $\phi$ is normal in $T_2$.
By definition, $T_2/\phi T_2= T_2/(\phi)=A^!(Q,C)$, 
which is finite dimensional by part (5).
Let $K=\{ x\in T_2\mid \phi x=0\}$. Then $K$ is a
right graded $T_2$-module. Since $T_2$ is noetherian,
PI, Artin-Schelter Gorenstein of GK-dimension one, it is Cohen-Macaulay
with respect to GK-dimension \cite[Theorem 1.1]{SZ}.
In particular, any nonzero submodule of $T_2$ has
GK-dimension one by the definition of GK-Cohen-Macaulay.
Consider the following exact sequence
$$0\to K(2)\to T_2(2) \xrightarrow{l_{\phi}} T_2 \to
T_2/(\phi T_2)(=A^!(Q,C))\to 0,$$
where $K(2)$ and $T_2(2)$ are the degree shift by 2 of graded modules
$K$ and $T_2$ respectively,
and note that $A^!(Q,C)$ is finite dimensional. Since
$H_{T_2}(t)=\frac{(1+t)^n}{(1-t)}$, $\dim (T_2)_d=2^{n}$
for $d>n$. Thus the above exact sequence implies
that $K_{d+2}=0$ for all $d\gg 0$. By the GK-Cohen-Macaulay
property of $T_2$, $K=0$, and hence $\phi$
is left regular. Similarly, $\phi$ is right regular.
By \cite[Corollary 2.2]{CV1}, $T_2/(\phi)$ is Koszul and Frobenius
with Hilbert series $(1+t)^n$.
\end{proof}

Note that part (6) is a special case of \cite[Corollary 2.6]{CV1}.
We will need the following lemma in the proof of Theorem \ref{yythm0.2}.

\begin{lemma}
\label{yylem1.4}\cite[Lemma 1.5]{RRZ1}
Let $A$ be a noetherian connected graded Artin-Schelter Gorenstein algebra and
let $z$ be a homogeneous regular normal elements of positive degree
such that $\mu_A(z)=cz$ for some $c\in \Bbbk^{\times}:=\Bbbk\setminus \{0\}$. 
Let $\tau$  be in $\Aut(A)$ such that $za = \tau(a)z$ for all $a\in A$.
Then $\mu_{A/(z)}$ is equal to $\mu_A \circ \tau$ when
restricted to $A/(z)$.
\end{lemma}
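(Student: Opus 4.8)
The plan is to realize the Nakayama automorphism of $B:=A/(z)$ through the balanced dualizing complex together with the bimodule short exact sequence produced by the normal element $z$. Before starting I would record two preliminary facts. First, putting $a=z$ in $za=\tau(a)z$ gives $z^2=\tau(z)z$, so right regularity of $z$ forces $\tau(z)=z$; thus $\tau$ preserves the two-sided ideal $(z)=zA=Az$ and descends to $B$. By hypothesis $\mu_A(z)=cz$ with $c\in\Bbbk^{\times}$, so $\mu_A$ also preserves $(z)$ and descends to $B$. Hence $\mu_A\circ\tau$ restricts to a well-defined graded automorphism of $B$, and the assertion is exactly that this restriction equals $\mu_B$. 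Second, factoring out a homogeneous regular normal element lowers injective dimension by one, so $B$ is again noetherian, connected graded, Artin--Schelter Gorenstein (of injective dimension $d-1$, where $d=\operatorname{injdim}A$), and therefore has a Nakayama automorphism $\mu_B$ to compare against.

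The key input is the bimodule short exact sequence obtained from right multiplication $\rho\colon a\mapsto az$ by $z$. Writing $l=\deg z$ and using $za=\tau(a)z$, one checks $\rho(a)\cdot c=azc=a\tau(c)z=\rho(a\tau(c))$, so $\rho$ is a bimodule map out of the twisted bimodule ${}^1A^{\tau}$, giving
$$0\longrightarrow {}^1A^{\tau}(-l)\xrightarrow{\;\rho\;} A\longrightarrow B\longrightarrow 0.$$
The plan is then to invoke the standard description of the Nakayama automorphism of a connected graded noetherian Artin--Schelter Gorenstein algebra via Van den Bergh local duality, namely that the balanced dualizing complex has the bimodule form $R_A\cong {}^1A^{\mu_A}(\ell)[d]$ for the Artin--Schelter index $\ell$, and to compute the dualizing complex of the quotient through $R_B\cong \mathrm{R}\Hom_A(B,R_A)$.

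Applying $\mathrm{R}\Hom_A(-,R_A)$ to the length-one resolution above turns $\mathrm{R}\Hom_A(B,R_A)$ into the shifted mapping cone of the map $R_A\to R_A$ induced by $\rho$, which is again "multiplication by $z$'', now taking place inside $R_A={}^1A^{\mu_A}(\ell)[d]$ where the right action is twisted by $\mu_A$. Using $\mu_A(z)=cz$ to see that the resulting image is the two-sided ideal $(z)$ in the $\mu_A$-twisted structure, the cohomology is concentrated in a single degree and is identified with ${}^1B^{\mu_A\tau}(\ell-l)[d-1]$, the extra factor $\tau$ entering precisely because the source of the resolution is ${}^1A^{\tau}$ rather than $A$. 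Matching this top cohomology against the template $R_B\cong {}^1B^{\mu_B}(\text{shift})[d-1]$ then yields $\mu_B=\mu_A\circ\tau$ on $B$.

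The main obstacle will be bookkeeping the bimodule twists (and, secondarily, the degree shifts, which are irrelevant to the automorphism) correctly through $\mathrm{R}\Hom_A(-,R_A)$, and in particular pinning down the exact role of the hypothesis $\mu_A(z)=cz$. That hypothesis is what guarantees that right multiplication by $z$ is a genuine endomorphism of the $\mu_A$-twisted bimodule $R_A$ whose cokernel is the quotient by the \emph{same} ideal $(z)$; the scalar $c\in\Bbbk^{\times}$ is central and a unit, so it merely rescales the bimodule generator of the top cohomology and does not affect the induced automorphism. Equivalently, in the direct-module language one may run the same argument on $\mathrm{R}\Gamma_{\mathfrak m}(-)$ and its graded Matlis dual, where $\mu_A(z)=cz$ is exactly the condition ensuring both that $\mu_A$ descends to $B$ and that the connecting map interacts with $R_A$ only through the scalar $c$.
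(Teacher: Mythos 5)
The paper does not actually prove this lemma: it is quoted verbatim from \cite[Lemma 1.5]{RRZ1} and used as a black box, so there is no in-paper argument to compare against. Your strategy --- pass to the (balanced) dualizing complex, feed the bimodule exact sequence $0\to {}^1A^{\tau}(-l)\xrightarrow{\rho}A\to A/(z)\to 0$ into $\mathrm{R}\Hom_A(-,R_A)$, and read the Nakayama automorphism of the quotient off the twist on the one surviving cohomology bimodule --- is the standard one, and is essentially the argument of the cited source (in the Matlis-dual, local-cohomology form you mention at the end). Your preliminary reductions are all correct: $\tau(z)=z$ by regularity, both $\tau$ and $\mu_A$ preserve $(z)$ and descend to $B=A/(z)$, $B$ is again AS Gorenstein of injective dimension one less, and $\rho$ really is a bimodule map out of ${}^1A^{\tau}(-l)$ (modulo the notational clash of using $c$ both for the scalar in $\mu_A(z)=cz$ and for a generic ring element in that verification).

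The substantive reservation is that the entire content of the lemma is the \emph{exact} formula $\mu_{A/(z)}=\mu_A\circ\tau$, as opposed to $\tau\circ\mu_A$ or a version with $\tau^{-1}$ (note that $\mu_A$ and $\tau$ need not commute, and that $az=z\tau^{-1}(a)$, so $\tau^{-1}$ is just as likely to appear a priori), and that formula is decided precisely by the bimodule-twist bookkeeping your write-up defers. Concretely, one must compute $\Ext^1_A(B,{}^1A^{\mu_A})$ as a bimodule: evaluation at $1$ gives $\Hom_A({}^1A^{\tau}(-l),{}^1A^{\mu_A})\cong{}^{\tau}A^{\mu_A}(l)$, the cokernel of left multiplication by $z$ is ${}^{\tau}B^{\mu_A}(l)$, and one must then rewrite a two-sided twist ${}^{\sigma}B^{\nu}$ in the normal form ${}^1B^{?}$ and match it against the convention defining $\mu_B$; each of these steps can silently introduce an inverse or reverse a composition, and a naive run of the computation can just as easily land on ${}^1B^{\tau^{-1}\mu_A}$. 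Until that is pinned down, the argument shows only that $\mu_{A/(z)}$ is \emph{some} composite of $\mu_A$ and $\tau^{\pm1}$. A second, more minor point: the identification $R_B\cong\mathrm{R}\Hom_A(B,R_A)$ presupposes that $A$ admits a balanced dualizing complex, which for a general noetherian connected graded AS Gorenstein algebra requires the $\chi$-condition; this is harmless for the algebras of this paper (which have enough normal elements) but should be flagged if you want the lemma in the stated generality.
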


Now, we can prove Theorem \ref{yythm0.2}.

\begin{proof}[Proof of Theorem \ref{yythm0.2}] 
(1) By Lemma \ref{yylem1.3}(6),
the algebra $A^!(Q,C)$ is Koszul and Frobenius with Hilbert series $(1+t)^n$.
By Koszul duality and \cite[Theorem 4.3 and Proposition 5.10]{Sm},
$A(Q,C)$ is Koszul, Artin-Schelter regular of global dimension 
$n$ with Hilbert series $\frac{1}{(1-t)^n}$. Thus part (1) follows.

(2)  By Lemmas \ref{yylem1.2} and \ref{yylem1.3},
$$\{x_i^2-x_1^2\}_{i=2}^{s}\cup \{x_p^2\}_{p=s+1}^n \cup \{\phi\}$$
is a sequence of regular normalizing elements of degree 2 in $T_1$.
By \cite[Corollary 1.4]{ST}, $A(Q,C)$ contains a sequence of regular 
normalizing elements of degree 2, say $\{f_1,\cdots,f_n\}$, 
such that $A(Q,C)/(f_1,\cdots,f_n)$ is finite dimensional.
Using \cite[Proposition 4.9(1)]{ASZ} repeatedly, $A(Q,C)$ is strongly 
noetherian. It is also easy to see that
$A(Q,C)$ has enough normal elements in the sense of \cite{Zh}.
Other assertions of part (2) follow from part (1) and \cite[Theorem 0.2]{Zh}.

(3) Let us work on the algebra $T_1$. By \cite[Proposition 4.1]{LWW}
or \cite[Example 5.5]{RRZ1},
the Nakayama automorphism of $T_1$ is determined by
$$\mu_{T_1}: x_j\mapsto (\prod_{w=1}^n p_{wj}) x_j=(-1)^n
(\prod_{w=1}^n q_{jw}) x_j$$
for all $j$.
Note that each $x_i^2-x_1^2$, for $i=2,\cdots,s$, is normal 
and 
$$\mu_{T_1}(x_i^2-x_1^2)=(\prod_{w=1}^{n} q_{1w})^2 (x_i^2-x_1^2)$$
by \eqref{E0.0.3}. The
conjugation by $x_i^2-x_1^2$ is the automorphism determined by
$$\tau_{x_i^2-x_1^2}: x_j \mapsto (x_i^2-x_1^2) x_j (x_i^2-x_1^2)^{-1}
=q_{ij}^2 x_j$$
for all $j$. Similarly, each $x_p^2$, for $p>s$, is normal and the
conjugation by $x_p^2$ is the automorphism determined by
$$\tau_{x_p^2}: x_j \mapsto (x_p^2) x_j (x_p^2)^{-1}
=q_{pj}^2 x_j$$
for all $j$. (Some details of checking the hypotheses of Lemma 
\ref{yylem1.4} is straightforward and omitted.)
Applying \cite[Lemma 1.5]{RRZ1} multiple times, we have
that the Nakayama automorphism of $T_2$ is
$$\mu_{T_2}=\mu_{T_1}\circ \prod_{i=2}^s \tau_{x_i^2-x_1^2}
\circ \prod_{p>s} \tau_{x_p^2},$$
or, after simplification, 
$$\mu_{T_2}: x_j\mapsto (-1)^n q_{j1}^2 (\prod_{w=1}^n q_{wj}) x_j$$
for all $j$.
By Lemma \ref{yylem1.2}, $\phi$ is normal in $T_2$ and 
$\mu_{T_2}(\phi)=(\prod_{w=1}^n q_{w1})^2 \phi$ by \eqref{E0.0.4}.
The conjugation by $\phi$ is determined by
$$\tau_{\phi}: x_j \mapsto \phi x_j \phi^{-1}.$$
By Lemma \ref{yylem1.2}(2,3),
$\tau_{\phi}$ sends $x_j$ to $q_{1j}^2 x_j$ when $j>s$ and
$x_j-y_j$ to $x_j+y_j$ when $j\leq s$. Therefore, by linear algebra, 
\begin{align*}
[I_{s\times s}-C_s]\left(
     \begin{array}{c}
     \tau_{\phi}(x_1)\\
     \tau_{\phi}(x_2)\\
     \vdots \\
     \tau_{\phi}(x_s) \\
     \end{array}
     \right)= [I_{s\times s}+C_s]\left(
     \begin{array}{c}
     x_1\\
     x_2\\
     \vdots \\
     x_s\\
     \end{array}
     \right)+ \left(
     \begin{array}{c}
     \sum\limits_{i=s+1}^n c_{1i}(1+q_{1i}^2)x_i\\
     \sum\limits_{i=s+1}^n c_{2i}(1+q_{1i}^2)x_i \\
     \vdots \\
     \sum\limits_{i=s+1}^n c_{si}(1+q_{1i}^2)x_i \\
     \end{array}
     \right).\\
\end{align*}
Let $(a_{ij})_{s\times s}= [I_{s\times s}-C_s]^{-1}$ and 
$(b_{ij})_{s\times s}= [I_{s\times s}-C_s]^{-1}[I_{s\times s}+C_s]$. 
Then for any $j\le s$, we have
$$\tau_{\phi}(x_j)= \sum\limits_{l=1}^sb_{jl}x_l + 
\sum\limits_{l=1}^s a_{jl}(\sum\limits_{i=s+1}^n c_{li}(1+q_{1i}^2)x_i) .$$
Hence the Nakayama automorphism of
$E:=A^!(Q,C)$ is given by
$$\mu_E: x_j\mapsto \begin{cases}
(-1)^n(\prod\limits_{w=1}^nq_{wj})x_j, \,\, \quad 
\qquad\qquad\qquad\qquad \qquad\qquad\qquad\qquad \;
\text{if}\,\, j> s\\
(-1)^n\sum\limits_{l=1}^s [b_{jl}q_{l1}^2(\prod\limits_{w=1}^nq_{wl})x_l 
+ a_{jl}(\sum\limits_{i=s+1}^nc_{li}(1+q_{i1}^2)(\prod\limits_{w=1}^nq_{wi})x_i)], 
\quad \text{if}\,  j\le s.
\end{cases}$$
By \cite[Theorem 4.3(2)]{RRZ2}, we have $\mu_E\mid_{E_1}=(-1)^n (\mu_A \mid_{A_1})^*.$ 
Thus the Nakayama automorphism of $A(Q,C)$ is given by
$$\mu_A: t_j\mapsto \begin{cases}
\sum\limits_{i=1}^s\sum\limits_{l=1}^sa_{il}c_{lj}(1+q_{j1}^2)
(\prod\limits_{w=1}^nq_{wj})t_i +(\prod\limits_{w=1}^nq_{wj})t_j, \,\, \quad \text{if}\,\, j> s\\
\sum\limits_{i=1}^sb_{ij}q_{j1}^2(\prod\limits_{w=1}^nq_{wj})t_i,\,\, \quad
\qquad\qquad\qquad\qquad\quad \;\;\text{if}\,\,  j\le s.
\end{cases} $$
\end{proof}

Graded skew Clifford algebras were introduced and studied by 
Cassidy-Vancliff in \cite{CV1, CV2}. Let $\mu:=(\mu_{ij}) 
\in {M}_n(\Bbbk)$ such that
$\mu_{ii}=1$ and $\mu_{ij}\mu_{ji}=1$ for all $1\leq i,j\leq n$.
A matrix $M:=(M_{ij})\in {M}_n(\Bbbk)$ is called {\it $\mu$-symmetric} if 
$M_{ij}=\mu_{ij}M_{ji}$ for all $i,j$, see \cite[Definition 1.2]{CV1}. 
Suppose that $M_1,\cdots, M_n$ are all $\mu$-symmetric 
$n\times n$-matrices. Following \cite[Definition 1.12]{CV1}, a 
{\it graded skew Clifford algebra} 
associated to $\mu$ and $M_1,\cdots, M_n$ is a graded
$\Bbbk$-algebra $A$ on degree-one generators $x_1,x_2,\cdots, x_n$
and on degree-two generators $y_1,y_2,\cdots, y_n$ with defining 
relations given by the following:
\begin{itemize}
\item
$x_ix_j+\mu_{ij}x_jx_i=\sum_{l=1}^n(M_l)_{ij}y_l$ for all
$i,j=1,2,\cdots, n$; and
\item 
the existence of a normalizing sequence
$\{r_1,\cdots,r_n\}$ of $A$ that spans $\Bbbk y_1+\cdots +\Bbbk y_n$.
\end{itemize}

Cassidy-Vancliff \cite{CV1, CV2} associated the geometric data to a
graded skew Clifford algebra by using noncommutative
algebraic geometry developed in \cite{ATV1,ATV2}. They generalized
the notion of a quadric and a quadric system in commutative algebra
to the noncommutative case, and proved that a graded skew Clifford
algebra is Artin-Schelter regular if and only if its associated 
quadratic system is normalizing and base point free.

\begin{proposition}
\label{yypro1.5}
Let $\Omega:=\sum_{i=1}^s t_i^2$. Then $\Omega$ is a regular
normal element of $A:=A(Q,C)$ and $A$ is a graded skew 
Clifford algebra.
\end{proposition}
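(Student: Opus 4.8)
The plan is to establish the two claims in turn. Regularity of $\Omega$ is immediate: by Theorem \ref{yythm0.2}(2) the algebra $A$ is a domain, and $\Omega\neq 0$ since the ordered monomials $t_it_j$ $(i\le j)$ span $A_2$ and number exactly $\dim A_2$ (the Hilbert series being $\frac{1}{(1-t)^n}$), so they form a basis and in particular $t_1^2,\dots,t_s^2$ are linearly independent; hence $\Omega$ is a non-zero-divisor. For normality I would compute $\Omega t_k$ directly. Applying the defining relation $t_\alpha t_k=q_{k\alpha}t_kt_\alpha+c_{k\alpha}\Omega$ twice to move $t_k$ past $t_\alpha^2$ gives $t_\alpha^2t_k=q_{k\alpha}^2\,t_kt_\alpha^2+q_{k\alpha}c_{k\alpha}\,\Omega t_\alpha+c_{k\alpha}\,t_\alpha\Omega$; summing over $\alpha\le s$ and using \eqref{E0.0.3} (so $q_{k\alpha}^2=q_{k1}^2$) and \eqref{E0.0.5} (so $q_{k\alpha}c_{k\alpha}=c_{k\alpha}$) yields
\[
\Omega t_k=q_{k1}^2\,t_k\Omega+\sum_{\alpha\le s}c_{k\alpha}\,\Omega t_\alpha+\sum_{\alpha\le s}c_{k\alpha}\,t_\alpha\Omega .
\]

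Writing the column vectors $\vec L=(\Omega t_k)_k$ and $\vec R=(t_k\Omega)_k$, this system reads $(I-\widetilde C)\vec L=(D+\widetilde C)\vec R$, where $\widetilde C$ has $(\widetilde C)_{k\alpha}=c_{k\alpha}$ for $\alpha\le s$ and $0$ otherwise, and $D=\mathrm{diag}(q_{11}^2,\dots,q_{n1}^2)$. Both $I-\widetilde C$ and $D+\widetilde C$ are block lower triangular, with diagonal blocks $I_{s\times s}-C_s,\ I_{n-s}$ and $D_s+C_s,\ \mathrm{diag}(q_{k1}^2)_{k>s}$ respectively. Taking $j=1$ in \eqref{E0.0.3} forces $q_{1\alpha}^2=1$, hence $q_{\alpha1}^2=1$, for all $\alpha\le s$, so $D_s=I_{s\times s}$ and $D_s+C_s=I_{s\times s}+C_s$; both $I_{s\times s}\pm C_s$ are invertible by \eqref{E0.0.8} and Remark \ref{yyrem0.1}. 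Thus both matrices above are invertible, $\vec L$ and $\vec R$ are related by an invertible scalar matrix, and $\Omega A=A\Omega$; together with regularity this proves the first assertion.

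For the skew Clifford structure I would put $x_i:=t_i$, $y_l:=t_l^2$, take $\mu_{ii}=1$ and $\mu_{ij}=-q_{ij}^{-1}$ for $i\neq j$ (so $\mu_{ij}\mu_{ji}=1$), and define the matrices $M_l$ by $(M_l)_{ii}=2\delta_{il}$ and, for $i\neq j$, $(M_l)_{ij}=c_{ji}$ if $l\le s$ and $(M_l)_{ij}=0$ if $l>s$. The $\mu$-symmetry $(M_l)_{ij}=\mu_{ij}(M_l)_{ji}$ is precisely the identity $c_{ji}=-q_{ij}^{-1}c_{ij}$ of \eqref{E0.0.2}. The Clifford relation $x_ix_j+\mu_{ij}x_jx_i=\sum_l(M_l)_{ij}y_l$ then reads $2t_i^2=2y_i$ for $i=j$ (so $y_i=x_i^2$ and the degree-two generators are redundant) and $t_it_j-q_{ij}^{-1}t_jt_i=c_{ji}\Omega$ for $i\neq j$; multiplying the latter by $-q_{ij}$ and using \eqref{E0.0.2} recovers exactly the defining relation $t_jt_i-q_{ij}t_it_j-c_{ij}\Omega=0$ of $A$. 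Hence $A$ is the quadratic algebra attached to this datum, and it only remains to check the normalizing-sequence condition: that $\sum_l\Bbbk y_l=\sum_l\Bbbk t_l^2$ (an $n$-dimensional space, by the Hilbert series) is spanned by a normalizing sequence. I would take $r_1=\Omega$, normal by the first part, followed by $t_{s+1}^2,\dots,t_n^2$ and then $t_2^2,\dots,t_s^2$; these $n$ elements span $\sum_l\Bbbk t_l^2$. Modulo $\Omega$ the relations collapse to $t_jt_i=q_{ij}t_it_j$, so $A/(\Omega)$ is a quotient of the skew polynomial ring $\Bbbk_{q_{ij}}[t_1,\dots,t_n]$, in which each $t_l^2$ is normal (indeed $t_l^2a=\sigma_l(a)t_l^2$ with $\sigma_l(t_k)=q_{kl}^2t_k$); as images of normal elements remain normal under further factoring, each successive $r_k$ is normal in its quotient, so the sequence is normalizing.

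The main obstacle I anticipate is bookkeeping rather than conceptual. In the second part one must choose the Clifford datum with the diagonal convention $\mu_{ii}=1$ (the off-diagonal formula $-q_{ij}^{-1}$ cannot be extended to the diagonal) and then verify the two consequences of \eqref{E0.0.2} that simultaneously make the relations coincide with those of $A$ and make each $M_l$ genuinely $\mu$-symmetric. In the first part the delicate point is recognizing, via \eqref{E0.0.3} with $j=1$, that $D_s=I_{s\times s}$, so that the transition matrix $(D+\widetilde C)^{-1}(I-\widetilde C)$ relating $\vec R$ to $\vec L$ is actually invertible; without this one only obtains the single inclusion $\Omega A\subseteq A\Omega$.
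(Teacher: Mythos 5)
Your proof is correct, and its first half takes a genuinely different route from the paper's. For the normality of $\Omega$ the paper argues globally: it notes that $A/(\Omega)\cong B/(\Omega)$ for the skew polynomial ring $B=\Bbbk_{q_{ij}}[t_1,\cdots,t_n]$ (where $t_j\Omega=q_{1j}^2\Omega t_j$ is immediate from \eqref{E0.0.3}), uses the domain property from Theorem \ref{yythm0.2}(2) to see that $\Omega$ is regular and that $A/\Omega A$ has Hilbert series $\frac{1-t^2}{(1-t)^n}$, and then concludes $\Omega A=(\Omega)=A\Omega$ by comparing Hilbert series along the surjection $A/\Omega A\to A/(\Omega)$. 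Your computation of $\Omega t_k=q_{k1}^2t_k\Omega+\sum_{\alpha\le s}c_{k\alpha}(\Omega t_\alpha+t_\alpha\Omega)$, followed by inverting $I-\widetilde C$ and $D+\widetilde C$ via \eqref{E0.0.8} and the observation $q_{\alpha 1}^2=1$ for $\alpha\le s$, is a correct local replacement --- it is the same style of argument the paper uses for $\phi$ in Lemma \ref{yylem1.2}(4) --- and it buys the explicit conjugation automorphism $\tau_\Omega$, at the cost of more bookkeeping; both arguments invoke Theorem \ref{yythm0.2}(2), so neither is circular. For the skew Clifford structure the two proofs coincide in substance (same identifications $x_i=t_i$, $y_l=t_l^2$, same normalizing sequence up to reordering, same reduction modulo $\Omega$ to the skew polynomial ring), but you make explicit the datum the paper leaves implicit; in particular your choice $\mu_{ij}=-q_{ij}^{-1}$ is the one that actually makes the displayed relation $x_ix_j+\mu_{ij}x_jx_i=\sum_l(M_l)_{ij}y_l$ reproduce the relations of $A$ and renders each $M_l$ genuinely $\mu$-symmetric via \eqref{E0.0.2}, whereas the paper's terse ``let $\mu$ be $Q$'' does not work under that sign convention, so your more careful verification is the reliable one.
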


\begin{proof} Let $B=\Bbbk_{q_{ij}}[t_1,\cdots, t_n]$ and also consider
$\Omega$ as an element in $B$. By \eqref{E0.0.3},
$t_j \Omega=q_{1j}^2 \Omega t_j$ for all $j$. Hence, $\Omega$ is a 
regular normal element in $B$, and   
the Hilbert series of $B/(\Omega)$ is $\frac{(1-t^2)}{(1-t)^{n}}$.
By comparing the relations, one sees that
$$A/(\Omega)\cong B/(\Omega).$$
Hence we have a surjective map
$$A/\Omega A\xrightarrow{ f} A/(\Omega)\cong B/(\Omega).$$
By Theorem \ref{yythm0.2}(2), $A$ is a domain. Then
$\Omega\in A$ is regular and 
the Hilbert series of $A/\Omega A$ is $\frac{(1-t^2)}{(1-t)^{n}}$,
which equals the Hilbert series of $B/(\Omega)$. 
This implies that $\Omega A=(\Omega)\subseteq A$. By symmetry,
$A\Omega=(\Omega)$. Therefore $\Omega$ is a normal element 
in $A$. 

Now let $\Omega_1=\Omega$ and $\Omega_i=t_i^2$ for $i=2,\cdots,n$.
After we showed that $\Omega_1$
is a normal element in $A$, one sees that 
$\{\Omega_1,\Omega_2,\cdots,\Omega_n\}$ is a normalizing 
sequence of $A$, which plays the role of $\{r_1,\cdots,r_n\}$
in \cite[Definition 1.12]{CV1}. 
Then $t_i$s play the role of $x_i$s and $t_i^2$s play the role
of $y_i$ in \cite[Definition 1.12]{CV1}.
Let $\mu$ be $Q$. Then one can easily recover 
the set of matrices $M_1,\cdots, M_n$ by the relations of $A$.
Therefore $A$ is a graded skew Clifford algebra by 
\cite[Definition 1.12]{CV1}.
\end{proof}

\begin{proposition}
\label{yypro1.6}
Let $s<n$.
Let $Q_s:=(q_{ij})_{s\times s}$ and $C_s:=(c_{ij})_{s\times s}$
and let $A(Q_s, C_s)$ be the subalgebra of $A(Q,C)$
generated by $\{t_1,\cdots,t_s\}$. Then $A(Q_s,C_s)$ 
is Artin-Schelter regular and $A(Q,C)$ is an iterated 
Ore extension of $A(Q_s,C_s)$.
\end{proposition}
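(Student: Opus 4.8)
The plan is to realize $A(Q,C)$ as an iterated Ore extension built on top of $A(Q_s,C_s)$ by adjoining the variables $t_{s+1},\dots,t_n$ one at a time, and to read off both assertions from this construction together with Theorem~\ref{yythm0.2}. First I would observe that for every $m$ with $s\le m\le n$ the truncated data $(Q_m,C_m):=((q_{ij})_{m\times m},(c_{ij})_{m\times m})$, together with the same $s$, again satisfy \eqref{E0.0.1}--\eqref{E0.0.8}: each of these conditions only constrains indices, so restricting all the index ranges to $\{1,\dots,m\}$ produces a subfamily of the original equations, while \eqref{E0.0.8} is literally unchanged. Hence Theorem~\ref{yythm0.2} applies to $R_m:=A(Q_m,C_m)$ for every such $m$; in particular $R_s=A(Q_s,C_s)$ is Koszul Artin--Schelter regular of global dimension $s$ and a domain (this is the first assertion), and each $R_m$ is a domain with Hilbert series $\frac{1}{(1-t)^m}$. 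Note that $\Omega=\sum_{k\le s}t_k^2$ is an element of $R_s$, and $R_s\subseteq R_{s+1}\subseteq\cdots\subseteq R_n=A(Q,C)$.

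Next, fix $p$ with $s<p\le n$ and define two maps on $R_{p-1}$: the diagonal scaling $\sigma_p$ with $\sigma_p(t_i)=q_{ip}t_i$ for $i<p$, and the $\sigma_p$-derivation $\delta_p$ with $\delta_p(t_i)=c_{ip}\Omega$ for $i<p$ (here $\Omega\in R_s\subseteq R_{p-1}$ since $p-1\ge s$). I would first check that $\sigma_p$ is a well-defined algebra automorphism of $R_{p-1}$. The only point is that $\sigma_p$ must preserve each relation $t_jt_i-q_{ij}t_it_j-c_{ij}\Omega$; since \eqref{E0.0.3} gives $\sigma_p(\Omega)=q_{1p}^2\Omega$ and \eqref{E0.0.4}, applied with the repeated index equal to $p$, gives $q_{ip}q_{jp}c_{ij}=q_{1p}^2c_{ij}$, this is a short computation, and $\sigma_p$ is invertible (its inverse is the scaling by $q_{ip}^{-1}$, well-defined by the same check).

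The harder step, and the one I expect to be the main obstacle, is to verify that $\delta_p$ is a well-defined $\sigma_p$-derivation, i.e. that the $\sigma_p$-derivation of the free algebra determined by $t_i\mapsto c_{ip}\Omega$ annihilates each relation $t_jt_i-q_{ij}t_it_j-c_{ij}\Omega$ modulo the relations of $R_{p-1}$. Expanding $\delta_p$ on such a relation and reducing the resulting degree-three expressions to a normal form, using the relations of $R_{p-1}$ and the normality $t_k\Omega=q_{1k}^2\Omega t_k$ of $\Omega$ (which follows from \eqref{E0.0.3} and is the content of Proposition~\ref{yypro1.5}), one is left with coefficient identities that are precisely guaranteed by \eqref{E0.0.5} and by the ``Jacobi-type'' relations \eqref{E0.0.6} and \eqref{E0.0.7}; these are exactly the identities that powered the computations in Lemma~\ref{yylem1.2}. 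This is the only genuinely laborious part of the argument, and I expect it to split into cases according to whether the indices $i,j$ lie in $\{1,\dots,s\}$ or not, since the cross terms of $\Omega$ behave differently in those cases.

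Granting the two checks, I would form the Ore extension $S_p:=R_{p-1}[t_p;\sigma_p,\delta_p]$. Since $\delta_p$ raises degree by one and $\sigma_p$ preserves it, $S_p$ is connected graded with $\deg t_p=1$ and is free as a left $R_{p-1}$-module on $\{t_p^a\}_{a\ge 0}$, so $H_{S_p}(t)=\frac{1}{1-t}H_{R_{p-1}}(t)=\frac{1}{(1-t)^p}$. The defining relation $t_pt_i=\sigma_p(t_i)t_p+\delta_p(t_i)=q_{ip}t_it_p+c_{ip}\Omega$ of $S_p$ is exactly a relation of $R_p$, so the universal property of the Ore extension yields a surjective graded algebra map $S_p\twoheadrightarrow R_p$ extending $R_{p-1}\hookrightarrow R_p$ and sending $t_p\mapsto t_p$. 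As $H_{S_p}(t)=\frac{1}{(1-t)^p}=H_{R_p}(t)$ by Theorem~\ref{yythm0.2}, this surjection is an isomorphism, whence $R_p\cong R_{p-1}[t_p;\sigma_p,\delta_p]$. Iterating from $p=s+1$ up to $p=n$ exhibits $A(Q,C)=R_n$ as an iterated Ore extension of $R_s=A(Q_s,C_s)$. Finally, since at each step the base $R_{p-1}$ embeds into the Ore extension $R_p$, the composite embeds $A(Q_s,C_s)$ into $A(Q,C)$ with image the subalgebra generated by $t_1,\dots,t_s$; this confirms that this subalgebra is $A(Q_s,C_s)$ and completes the proof.
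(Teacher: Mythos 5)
Your proposal is correct and follows essentially the same route as the paper, which simply asserts that $A(Q,C)$ is an iterated Ore extension of $A(Q_s,C_s)$ obtained by adjoining $t_{s+1},\dots,t_n$ consecutively and that the truncated data again satisfy \eqref{E0.0.1}--\eqref{E0.0.8} so that Theorem \ref{yythm0.2} applies. You merely fill in the details the paper leaves as ``one can easily check'' (the explicit $\sigma_p$, $\delta_p$, and the Hilbert-series comparison identifying the Ore extension with $A(Q_p,C_p)$), which is a faithful elaboration rather than a different argument.
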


\begin{proof} Note that \eqref{E0.0.1}-\eqref{E0.0.8}
hold for the submatrices $(Q_s, C_s)$. The first assertion 
follows from Theorem \ref{yythm0.2}. For each $j>s$, using 
the relations
$$t_j t_i= q_{ij} t_i t_j +c_{ij} (\sum_{i=1}^s t_s^2), \quad
\forall \; i<j,$$
one can easily check that $A(Q,C)$ is an iterated 
Ore extension of $A(Q_s,C_s)$ by adding $t_{s+1}, t_{s+2},
\cdots, t_n$ consecutively.
\end{proof}

It is possible that $A(Q,C)$ in Theorem \ref{yythm0.2} is 
always an iterated Ore extension from $\Bbbk$, which can be
verified for all examples in the next section.

\section{Examples}
\label{yysec2}
In this section we use the main result to construct some Artin-Schelter
regular algebras of low global dimension and to calculate their Nakayama 
automorphisms. 

\begin{example}\label{yyex2.1}
Let $n=3$, $s=1$ and 
$Q=\left(
    \begin{array}{ccc}
    1 & 1 & 1\\
    1 & 1 & q\\
    1 & q^{-1} & 1
    \end{array}
    \right) \quad {\rm{and}}\quad  
C=\left(
    \begin{array}{ccc}
    0 & 0 &  0\\
    0 & 0 &  1\\
    0 & -q^{-1}& 0 
    \end{array}
    \right).$
All conditions \eqref{E0.0.1}-\eqref{E0.0.8} are 
satisfied. The algebra is isomorphic to 
$$\Bbbk\langle t_1,t_2,t_3\rangle
( t_3t_2-qt_2t_3-t_1^2,\;\; t_1\; {\text{central}}).$$ 
This is an Artin-Schelter regular of dimension 3, which is 
the algebra $A(2)$ in \cite{LMZ}. The Nakayama automorphism 
of $A(Q,C)$ is given by
$$\mu_A: t_1\mapsto t_1,\quad t_2 \mapsto q^{-1} t_2,
\quad t_3\mapsto 
q t_3.$$
When $q$ is not a root of unity, the
group actions, Hopf algebra actions and cancellation property of this 
algebra was studied in \cite{LMZ}. 
\end{example}

When $n=4$, there are many solutions to system
\eqref{E0.0.1}-\eqref{E0.0.8} (for example, solutions can be listed by Maple). 
We pick two of them  and use Theorem \ref{yythm0.2} to calculate their Nakayama
automorphism. It is possible that these algebras are isomorphic,
in a non-obvious way, to some algebras constructed and studied in 
\cite{CV1, CV2, NV, VV}.

\begin{example}\label{yyex2.2}
Let $n=4$, $s=2$,
$Q=\left(
        \begin{array}{cccc}
        1 & -1 & \frac{1}{b}& -1\\
       -1 & 1 & \frac{1}{b} &1\\
        b & b& 1& b \\
        -1& 1& \frac{1}{b}&1
        \end{array}
        \right)$ 
and $C=\left(
        \begin{array}{cccc}
        0 & 0 & 0& 0\\
        0 & 0 & 0 &a\\
        0 & 0 & 0& 0 \\
        0 & -a& 0&0
        \end{array}
        \right).$
Then  $$A(Q,C)=  \frac{\Bbbk\langle t_1,t_2,t_3,t_4 \rangle}
{\left(
        \begin{array}{c}
        t_2t_1+t_1t_2\\
        bt_3t_1-t_1t_3\\
        t_4t_1+t_1t_4\\
        bt_3t_2-t_2t_3 \\
        t_4t_2-t_2t_4-a(t_1^2+t_2^2) \\
        t_4t_3-bt_3t_4
        \end{array}
        \right)}. $$
We have $$(I_{2\times 2}-C_2)= 
\left(
        \begin{array}{cc}
        1 & 0  \\
        0 & 1  \\
        \end{array}
        \right)=I_{2\times 2},
\,\,\,\,(I_{2\times 2}-C_2)^{-1}= I_{2\times 2} $$
and 
$$ (I_{2\times 2}-C_2)^{-1}\cdot (I_{2\times 2}+C_2)= I_{2\times 2}.$$
By Theorem \ref{yythm0.2}(3), the Nakayama automorphism is given by
$$\mu_A: \quad
t_1\mapsto bt_1, \quad t_2\mapsto -b t_2, \quad
t_3\mapsto \frac{1}{b^3} t_3, \quad
t_4\mapsto -2abt_2-bt_4.$$
\end{example}

\begin{example}\label{yyex2.3}
Let $n=4$, $s=3$ and 
$Q=\left(
   \begin{array}{cccc}
   1 & -1 & 1& \frac{1}{b}\\
  -1 & 1 & -1 &\frac{1}{b}\\
   1 & -1& 1& \frac{1}{b} \\
   b & b& b&1
   \end{array}
  \right)$ and $C=\left(
  \begin{array}{cccc}
   0 & 0 & a & 0\\
   0 & 0 & 0 & 0\\
  -a & 0 & 0 & 0 \\
   0 & 0 & 0 & 0
  \end{array}
  \right)$ with $1+a^2\neq 0, a\in \Bbbk$ and $b\in \Bbbk^{\times}$.
Then $$A(Q,C) =  
\frac{\Bbbk\langle t_1,t_2,t_3,t_4 \rangle}
{\left(
    \begin{array}{c}
    t_2t_1+t_1t_2\\
    t_3t_1-t_1t_3-a(t_1^2+t^2_2+t_3^2)\\
    bt_4t_1-t_1t_4\\
    t_2t_3+t_3t_2 \\
    bt_4t_2-t_2t_4 \\
    bt_4t_3-t_3t_4
    \end{array}
    \right)}. $$ 
We have  
$$(I_{3\times 3}-C_3)= 
    \left(
    \begin{array}{ccc}
    1 & 0 & -a \\
    0 & 1 & 0 \\
    a & 0 & 1 \\
    \end{array}
    \right), \,\,\,\,
(I_{3\times 3}-C_3)^{-1}= 
    \left(
    \begin{array}{ccc}
    \frac{1}{1+a^2} & 0 & \frac{a}{1+a^2} \\
    0 & 1 & 0 \\
    \frac{-a}{1+a^2} & 0 & \frac{1}{1+a^2} \\
    \end{array}
    \right) $$
and $$ (I_{3\times 3}-C_3)^{-1}\cdot (I_{3\times 3}+C_3)
= \left(
    \begin{array}{ccc}
    \frac{1-a^2}{1+a^2} & 0 & \frac{2a}{1+a^2} \\
    0 & 1 & 0 \\
    \frac{-2a}{1+a^2} & 0 & \frac{1-a^2}{1+a^2} \\
    \end{array}
    \right).$$ 
		By Theorem \ref{yythm0.2}, its Nakayama automorphism is given by
$$\mu_A: t_j\mapsto \begin{cases}
\frac{a^2b-b}{1+a^2}t_1+\frac{2ab}{1+a^2}t_3,\,\, \,\, \qquad \qquad \text{if}\,\, j=1\\
bt_2,  \qquad\qquad\qquad\qquad \quad \;\; \text{if}\,\, j=2\\
\frac{-2ab}{1+a^2}t_1 + \frac{a^2b-b}{1+a^2}t_3 ,  \,\,\,\, \qquad\qquad \text{if}\,\, j=3\\
\frac{1}{b^3}t_4, \qquad\qquad\qquad\qquad\quad \;\, \text{if}\,\, j=4.
\end{cases} $$
\end{example}

\begin{example}
\label{yyex2.4}
Let $s=n$, $q_{ij}=1$ for all $i,j$, and $C$ be a skew 
symmetric matrix.  Then \eqref{E0.0.1}-\eqref{E0.0.5} are
trivially satisfied. Equation \eqref{E0.0.8} is saying that
\begin{equation}
\label{E2.4.1}\tag{E2.4.1}
\det(I_{n\times n}-C)\neq 0.
\end{equation}
It is not hard to check that \eqref{E0.0.6} is equivalent to
\eqref{E0.0.7}, and equivalent to 
\begin{equation}
\label{E2.4.2}\tag{E2.4.2}
c_{ij} c_{kl}+c_{jk}c_{il}-c_{ik}c_{jl}=0,
\quad \forall\;\; 
1\leq i<j<k<l\leq n.
\end{equation}
Let $A(Q,C)$ be the algebra defined by 
\begin{equation}
\label{E2.4.3}\tag{E2.4.3}
A(Q,C)=\Bbbk\langle t_1,\cdots,t_n\rangle/(t_j t_i -t_it_j-
c_{ij} (\sum_{l=1}^n t_l^2), \forall i < j).
\end{equation}
When $C$ satisfies \eqref{E2.4.1} and \eqref{E2.4.2},
by Theorem \ref{yythm0.2}, $A(Q,C)$ is a Koszul, strongly 
noetherian, Auslander regular algebra of global dimension 
$n$ and its Nakayama automorphism is determined by
$$\mu_A:t_j\mapsto \sum\limits_{i=1}^nb_{ij}t_i, j=1,2,\cdots, n,$$ 
where  $(b_{ij})_{n\times n}=(I_{n\times n}-C)^{-1}(I_{n\times n}+C)$.

It is clear that $(b_{ij})_{n\times n}$ is the identity
matrix if and only if $C$ is zero. Therefore $A(Q,C)$ is 
Calabi-Yau if and only if $C=0$ if and only if 
$A(Q,C)=\Bbbk[t_1,\cdots,t_n]$.

Note that the algebra $A(Q,C)$ is a deformation quantization of the 
Poisson algebra given in \cite[Section 3.2]{LWZ}.
By \cite[Lemma 3.2]{LWZ}, \eqref{E2.4.2} is equivalent to 
${\text{rank}} \; C\leq 2$. When $\Bbbk$ is algebraically closed,
it follows from \cite[Theorems 2.1 and 2.5]{DRZ} that
every skew symmetric matrix $C$ with rank at most 2 (when $n=4$) 
is orthogonally similar to either
$$\left(
   \begin{array}{cccc}
   0 & a & 0 & 0\\
  -a & 0 & 0 & 0\\
   0 & 0 & 0 & 0\\
   0 & 0 & 0 & 0
   \end{array}
  \right), 
	\quad {\text{or}} \quad
	\left(
  \begin{array}{cccc}
   0 & i& 0 & 0\\
   -i & 0 & 1 & 0\\
  0 & -1 & 0 & 0 \\
   0 & 0 & 0 & 0
  \end{array}
  \right),
	\quad {\text{or}} \quad
	\left(
  \begin{array}{cccc}
   0 & -1 & i & 0\\
   1 & 0 & 0 & -i\\
  -i & 0 & 0 & -1 \\
   0 & i & 1 & 0
  \end{array}
  \right)$$
where $a\in \Bbbk$ and $i^2=-1$. This implies that 
there are only three isomorphism classes of $A(Q,C)$ corresponding to 
the above three matrices respectively (the assertion is also valid 
when $n>4$).
In all cases, one can show that $A(Q,C)$ is an iterated Ore extension 
from $\Bbbk$ (details are omitted). 
\end{example}

\providecommand{\bysame}{\leavevmode\hbox to3em{\hrulefill}\thinspace}
\providecommand{\MR}{\relax\ifhmode\unskip\space\fi MR }
\providecommand{\MRhref}[2]{%
\href{http://www.ams.org/mathscinet-getitem?mr=#1}{#2} }
\providecommand{\href}[2]{#2}


\begin{thebibliography}{10}

\bibitem[ASZ]{ASZ}
M. Artin, L.W. Small and J.J. Zhang, 
Generic flatness for strongly noetherian algebras, 
\emph{J. Algebra} {\bf 221}  (1999),  no. 2, 579--610. 



\bibitem[ATV1]{ATV1}
M. Artin, J.Tate and M. Van den Bergh, 
Some algebras associated to automorphisms of elliptic curves, 
\emph{The Grothendieck Festschrift}, 
Vol. 1 (eds P. Cartier et al.; Birkh$\ddot{a}$user, Boston, 1990) 33--85.

\bibitem[ATV2]{ATV2}
M. Artin, J. Tate  and M. Van den Bergh,
Modules over regular algebras of dimension $3$,
\emph{Invent. Math.} {\bf106} (1991), no. 2, 335-388.

%\bibitem[BZ]{BZ}
%K.A. Brown and J.J. Zhang, 
%Dualising complexes and twisted Hochschild (co)homology for
%Noetherian Hopf algebras,
%\emph{J. Algebra}, {\bf 320} (2008), no. 5, 1814--1850.

%\bibitem[CE]{CE}
%H. Cartan and S. Eilenberg, 
%\emph{Homological algebra}, 
%Princeton University Press, Princeton, N.J., (1956).

\bibitem[CV1]{CV1}
T. Cassidy and M. Vancliff,
Generalizations of Graded Clifford Algebras and of
Complete Intersections,
\emph{J. Lond. Math. Soc.} {\bf 81} (2010), 91--112.

\bibitem[CV2]{CV2}
T. Cassidy and M. Vancliff,
Corrigendum to ``Generalizations of Graded Clifford Algebras and of
Complete Intersections'', 
\emph{J. Lond. Math. Soc.} (2) {\bf 90}  (2014),  no. 2, 631--636.

\bibitem[DRZ]{DRZ}
D.\v{Z}.  Dokovi\'c, K. Rietsch and  K.-M. Zhao,
Normal forms for orthogonal similarity classes of skew-symmetric matrices,
{\it J. Algebra} {\bf 308} (2007), 686-703.

%\bibitem[GY]{GY}
%K.R. Goodearl and M.T. Yakimov,
%Unipotent and Nakayama automorphisms of quantum
%nilpotent algebras, preprint (2013), arXiv:1311.0278.

%\bibitem[J1]{J1}
%P. J{\o}rgensen, 
%Non-commutative graded homological identities, 
%{\it J. London Math. Soc.} {\bf 57} (1998), 336-350.

\bibitem[KKZ]{KKZ}
E. Kirkman, J. Kuzmanovich and J.J. Zhang, 
Noncommutative complete intersections,
\emph{J. Algebra} {\bf 429}  (2015), 253--286.


\bibitem[Le]{Le}
T. Levasseur,
Some properties of non-commutative regular graded rings,
{\it Glasg. Math. J.} {\bf 34} (1992), 227-300.


\bibitem[LWW]{LWW}
L.-Y. Liu, S.-Q. Wang and Q.-S. Wu, 
Twisted Calabi-Yau property of Ore extensions, 
{\it J. Noncommut. Geom.} {\bf 8} (2014), no. 2, 587-609.

\bibitem[LMZ]{LMZ}
J.-F. L\"u, X.-F. Mao and J.J. Zhang, 
Nakayama automorphism and applications, 
\emph{Trans. Amer. Math. Soc.}, (to appear), preprint
arXiv:1408.5761.

\bibitem[LWZ]{LWZ}
J.-F. L\"u, X. Wang and G. Zhuang, 
Universal enveloping algebras of Poisson Ore extensions, 
\emph{Proc. Amer. Math. Soc.} {\bf 143}  (2015),  no. 11, 
4633--4645.

%\bibitem[LPWZ]{LPWZ}
%D.-M. Lu, J.P. Palmieri, Q.-S.  Wu and J.J. Zhang, 
%Koszul equivalences in $A_{\infty}$-algebras, 
%\emph{New York J. Math.} {\bf 14}  (2008), 325--378.

\bibitem[NV]{NV}
M. Nafari and M. Vancliff,
Graded skew Clifford algebras that are twists of graded Clifford algebras, 
\emph{Comm. Algebra} {\bf  43}  (2015),  no. 2, 719--725.

%\bibitem[Py]{Py}
%B. Pym, 
%Quantum deformations of projective three-space,
%\emph{Adv. Math.} {\bf 281} (2015) 1216--1241.


\bibitem[RRZ1]{RRZ1}
M. Reyes, D. Rogalski and J.J. Zhang,
Skew Calabi-Yau algebras and homological identities, 
\emph{Adv. Math.} {\bf 264} (2014), 308--354.

\bibitem[RRZ2]{RRZ2}
M. Reyes, D. Rogalski and J.J. Zhang,
Skew Calabi-Yau triangulated categories and Frobenius Ext-algebras,
\emph{Trans. Amer. Math. Soc.}, (to appear), preprint (2014)
arXiv:1408.0536.

\bibitem[ST]{ST}
B. Shelton and C. Tingey, 
On Koszul algebras and a new construction of Artin–Schelter regular
algebras, 
\emph{J. Algebra} {\bf 241} (2001), 789--798.

\bibitem[Sm]{Sm}
S.P. Smith, 
Some finite-dimensional algebras related to elliptic curves,
\emph{Representation
theory of algebras and related topics}, (Mexico City, 1994), 315--348, CMS
Conf. Proc., {\bf 19}. Amer. Math. Soc., Providence, RI, 1996.


\bibitem[SZ]{SZ}
J.T. Stafford and J.J. Zhang,
Homological properties of (graded) noetherian PI rings,
{\it J. Algebra} {\bf 168} (1994), 998-1026


\bibitem[V1]{V1}
M. Van den Bergh, 
Existence theorems for dualizing complexes over non-commutative graded and filtered rings, 
\emph{J. Algebra} {\bf 195} (2) (1997), 662--679.

\bibitem[V2]{V2}
M. Van den Bergh, 
A relation between Hochschild homology and cohomology for Gorenstein rings, 
\emph{Proc. Amer. Math. Soc.} {\bf 126} (5) (1998), 1345--1348;

M. Van den Bergh, 
\emph{Proc. Amer. Math. Soc.} {\bf 130} (9) (2002), 2809--2810.


\bibitem[VV]{VV}
M. Vancliff and P.P. Veerapen, 
Point modules over regular graded skew Clifford algebras, 
\emph{J. Algebra} {\bf 420}  (2014), 54--64. 

\bibitem[WW]{WW}
C. Walton and X. Wang,
On quantum groups associated to non-Noetherian regular 
algebras of dimension 2,
preprint (2015), arXiv:1503.09185.

\bibitem[Ye]{Ye}
A. Yekutieli,
The rigid dualizing complex of a universal enveloping algebra,
{\it J. Pure Appl. Algebra}  {\bf 150}  (2000),  no. 1, 85--93.

\bibitem[Zh]{Zh}
J.J. Zhang,
Connected graded Gorenstein algebras with enough normal elements, 
\emph{J. Algebra} {\bf 189}  (1997),  no. 2, 390--405. 


%\bibitem[ZZ1]{ZZ1}
%J.J. Zhang and J. Zhang, Double ore extension, 
%{\it J. Pure Appl. Algebra} {\bf 212} (2008), no. 12,  2668-2690.

\bibitem[ZZ]{ZZ}
J.J. Zhang and J. Zhang, 
Double extension regular algebras of Type $(14641)$, 
{\it J. Algebra} {\bf 322} (2009), no.2, 373-409.

\bibitem[ZVZ]{ZVZ} 
C. Zhu, F. Van Oystaeyen and Y.H. Zhang, 
Calabi-Yau extension and localization of 
Koszul regular algebras, preprint (2014), arXiv:1401.0330.

\end{thebibliography}
\end{document}